\definecolor{darkblue}{rgb}{0.2,0.2,0.6}
\newcommand\sfp{\mathsf{p}}
\newcommand\nb{\nabla}
\newcommand{\beq}{\begin{equation} \begin{split}}
\newcommand{\eeq}{\end{split} \end{equation}}
\newcommand\Omg{\Omega}
\def\section{\@startsection{section}{1}\z@{.9\linespacing\@plus\linespacing}%
	{.7\linespacing} {\fontsize{13}{14}\selectfont\bfseries\centering}}
\def\paragraph{\@startsection{paragraph}{4}%
	\z@{0.3em}{-.5em}%
	{$\bullet$ \ \normalfont\itshape}}
\renewcommand\and{\qquad\text{and}\qquad}
\newcommand\sm{\setminus}
\newcommand\dl{\delta}
\newcommand{\comm}[1]{}
\def\sfH{\mathsf{H}}
\def\bm1{\mathbbm{1}}
\def\G{\Gamma}
\def\s{\sigma}
\def\sfh{\mathsf{h}}
\def\omg{\omega}
\def\arr{\rightarrow}
\renewcommand{\gg}{{\gamma}}
\def\aa{\alpha}
\def\lm{\lambda}
\def\s{\sigma}
\def\sess{\sigma_{\rm ess}}
\def\kp{\kappa}
\def\sfH{\mathsf{H}}
\def\sfh{\mathsf{h}}
\def\sfP{\mathsf{P}}
\def\dd{{\mathsf{d}}}
\def\omg{\omega}
\def\sfP{\mathsf{P}}
\newcounter{counter_a}
\newenvironment{myenum}{\begin{list}{{\rm(\roman{counter_a})}}%
{\usecounter{counter_a}
\setlength{\itemsep}{1.ex}\setlength{\topsep}{0.8ex}
\setlength{\leftmargin}{5ex}\setlength{\labelwidth}{5ex}}}{\end{list}}
\newcommand{\eg}{{\it e.g.}\,}
\newcommand{\cf}{{\it cf.}\,}
\numberwithin{figure}{section}
\numberwithin{equation}{section}
\theoremstyle{plain}
\newtheorem*{thm*}{Theorem}
\newtheorem{thm}{Theorem}[section]
\theoremstyle{remark}
\newtheorem{remark}[thm]{Remark}
\theoremstyle{plain}
\newcommand{\supp}{\mathrm{supp}\,}
\newcommand{\beu}{\begin{equation*}}
\newcommand{\eeu}{\end{equation*}}
\newcommand{\besu}{\begin{equation*}
\begin{aligned}}
\newcommand{\eesu}{\end{aligned}
\end{equation*}}
\newcommand{\bes}{\begin{equation}
\begin{aligned}}
\newcommand{\ees}{\end{aligned}
\end{equation}}
\newcommand\cA{\mathcal A}
\newcommand\cB{\mathcal B}
\newcommand\cG{\mathcal G}
\newcommand\cJ{\mathcal J}
\newcommand\cL{\mathcal L}
\newcommand\cM{\mathcal M}
\newcommand\frh{\mathfrak h}
\newcommand\eps{\varepsilon}
\newcommand\ov{\overline}
\newcommand\wh{\widehat}
\newcommand\sign{{\rm sign\,}}
\newcommand\void[1]{}
\def\ov{\overline}
\def\eps{\varepsilon}
\def\ran{{\rm ran\,}}
      \def\dC{{\mathbb C}}
   \def\dN{{\mathbb N}}   
      \def\dR{{\mathbb R}}
   \def\sfH{{\mathsf H}}   \def\sfI{{\mathsf I}}
      \def\sfL{{\mathsf L}}
\def\sfM{{\mathsf M}}   \def\sfN{{\mathsf N}}   
\def\sfP{{\mathsf P}}      \def\sfR{{\mathsf R}}
\def\cA{{\mathcal A}}   \def\cB{{\mathcal B}}   
\def\cG{{\mathcal G}}      \def\cI{{\mathcal I}}
\def\cJ{{\mathcal J}}      \def\cL{{\mathcal L}}
\def\cM{{\mathcal M}}      \def\cO{{\mathcal O}}
\newcommand{\dom}{\mathrm{dom}\,}
\definecolor{DarkGreen}{rgb}{0,0.5,0.1}
\definecolor{DarkBlue}{rgb}{0,0.1,0.5}
\newcommand\soutD{\bgroup\markoverwith
	{\textcolor{DarkGreen}{\rule[.5ex]{2pt}{1pt}}}\ULon}
\newcommand\soutP{\bgroup\markoverwith
	{\textcolor{blue}{\rule[.5ex]{2pt}{1pt}}}\ULon}
\newcommand{\Hm}[1]{\leavevmode{\marginpar{\tiny%
			$\hbox to 0mm{\hspace*{-0.5mm}$\leftarrow$\hss}%
			\vcenter{\vrule depth 0.1mm height 0.1mm width \the\marginparwidth}%
			\hbox to
			0mm{\hss$\rightarrow$\hspace*{-0.5mm}}$\\
			\relax\raggedright #1}}}
\newcommand{\OO}{\mathcal{O}}
\newtheorem{theorem}{Theorem}[section]
\newtheorem{lemma}[theorem]{Lemma}
\newtheorem{proposition}[theorem]{Proposition}
\begin{document}

\title[Bound states of weakly deformed soft waveguides]{Bound states of weakly deformed soft waveguides}

\author[P. Exner]{Pavel Exner}
\address[P. Exner]{Department of Theoretical Physics, Nuclear Physics Institute, Czech Academy of Sciences, 25068 \v Re\v z near Prague, Czechia, and Doppler Institute for Mathematical Physics and Applied Mathematics, Czech Technical University, B\v rehov\'a 7, 11519 Prague, Czechia}
\email{exner@ujf.cas.cz}

\author[S. Kondej]{Sylwia Kondej}
\address[S. Kondej]{Institute of Physics, University of Zielona G\'ora, ul.\ Szafrana 4a, \\ 65246 Zielona G\'ora, Poland} \email{s.kondej@if.uz.zgora.pl}

\author[V. Lotoreichik]{Vladimir Lotoreichik}
\address[V. Lotoreichik]{Department of Theoretical Physics, Nuclear Physics Institute, 	Czech Academy of Sciences, 25068 \v Re\v z, Czechia}
\email{lotoreichik@ujf.cas.cz}

\subjclass{35J10, 35P15, 81Q37}

\keywords{Schr\"odinger operators, strip-shaped potentials, discrete spectrum, weak deformation}

\maketitle

\begin{abstract}
In this paper we consider the two-dimensional Schr\"odinger operator with an attractive potential which is a multiple of the characteristic function of an unbounded strip-shaped region, whose thickness is varying and is determined by the function $\mathbb{R}\ni x \mapsto d+\varepsilon f(x)$, where $d > 0$ is a constant, $\varepsilon > 0$ is a small parameter, and $f$ is a compactly supported continuous function. We prove that if $\int_{\mathbb{R}} f \,\mathsf{d} x > 0$, then the respective Schr\"odinger operator has a unique simple eigenvalue below the threshold of the essential spectrum for all sufficiently small $\varepsilon >0$ and we obtain the asymptotic expansion of this eigenvalue in the regime $\varepsilon\rightarrow 0$. An asymptotic expansion of the respective eigenfunction as $\varepsilon\rightarrow 0$ is also obtained. In the case that $\int_{\mathbb{R}} f \,\mathsf{d} x < 0$ we prove that the discrete spectrum is empty for all sufficiently small $\varepsilon > 0$. In the critical case $\int_{\mathbb{R}} f \,\mathsf{d} x = 0$, we derive a sufficient condition for the existence of a unique bound state for all sufficiently small $\varepsilon > 0$.
\end{abstract}

\section{Introduction}\label{s:intro}
\setcounter{equation}{0}
Spectral analysis of differential operators describing quantum particles confined to tubular shape regions attracted a lot of attention in recent decades.
Interesting relations were found linking spectral properties of such systems to their geometry; we refer to \cite{EK15} and the bibliography therein.
Two most often investigated models were Dirichlet tubes and `leaky wires', that is, Schr\"odinger operators with $\delta$-interactions supported on curves.
Analysis of these models can be applied to the description of numerous systems investigated experimentally such as semiconductor nanowires, cold atom waveguides, and many others.

When regarded as models of real physical systems, both Dirichlet tubes and `leaky wires' include idealizations. In the first case it is the hard wall preventing tunnelling between different parts of the guide, in the second one it is the zero width of such a `wire'. This motivated recently interest to another class of models, for which the name \emph{soft quantum waveguides} was coined, using Schr\"odinger operators in which the mentioned singular potential is replaced by a regular potential `ditch'. Here again a non-trivial geometry may give rise to a non-trivial discrete spectrum. Using the Birman-Schwinger principle, a sufficient condition was derived for the existence of bound states in two-dimensional soft waveguides \cite{Ex20}. Another possible approach is direct use of the variational method; in this way the discrete spectrum existence was established in guides of a particular shape usually labelled as `bookcover' \cite{KKK21}. Furthermore, one can prove a sort of isoperimetric inequality in this setting \cite{EL20}, see also \cite{EKP20, WT14} for related results.

Many other questions remain open, \cf~\cite{Ex20}. In this paper we address one of them, namely the behaviour of the discrete spectrum in the case of weak geometric perturbations. We analyse a particular case when a flat-bottom guide is a weak local deformation of a straight potential channel. Using the Birman-Schwinger principle we prove that such a system has a unique bound state provided that the weak deformation enlarges the potential support area and we derive the first two terms of the eigenvalue asymptotic expansion in terms of the perturbation parameter, together with the corresponding expansion of the eigenfunction. On the contrary, we show that bound states are absent if the interaction support is shrunk under the weak perturbation. Finally, in the critical case when the guide `area' is preserved we prove a sufficient condition for the existence of a unique bound state
under weak deformation; it is present, roughly speaking, when the perturbation is sufficiently extended in the longitudinal direction.

Our results are  generalizations for soft waveguides  of the classical results
on Dirichlet tubes obtained in  \cite{BGRS97} by Bulla, Gesztesy, Simon,   and Regner and in~\cite{EV97} by the first author and Vugalter; as in those papers we restrict ourselves to the  situation where the deformation is one-sided. The methods employed in the present paper are significantly different. In contrast to
locally deformed Dirichlet tubes the underline Hilbert space for soft waveguides is not varying in the course of deformation. In this paper, we are not using the change of coordinates employed in~\cite{BGRS97, EV97}. The analysis of the non-critical case reduces to careful inspection of the integral operator involved into the Birman-Schwinger principle and its convenient decomposition. In the critical case, we apply the min-max principle on a suitably chosen trial function.

The employed decomposition of the integral operator in the Birman-Schwinger principle is inspired by the analysis of the attractive $\dl$-interaction supported on weakly deformed straight lines in two dimensions~\cite{EK15} and weakly deformed planes in three dimensions~\cite{EKL18}.
\subsection{Geometry of the waveguide and the Hamiltonian}\label{s:geo}
\setcounter{equation}{0}
Let $f\colon\dR\arr\dR$ be a continuous compactly supported function and let $\eps \in [0, \eps_0)$ with $\eps_0 >0$ being sufficiently small.
As indicated above, the object of our interest is the Schr\"odinger operator with a flat-bottom ditch-shaped potential, the support of which is a planar strip
\begin{equation*}\label{eq:strip}
	\Omg_\eps := \big\{(x_1,x_2)\in\dR^2\colon
    0 < x_2 < d+\eps f(x_1)\big\}
\end{equation*}
as sketched in Figure~\ref{Fig-1}. Clearly, for all sufficiently small $\eps > 0$ the planar strip $\Omg_\eps$ is well defined.
\begin{figure}
\includegraphics[width=.70\textwidth]{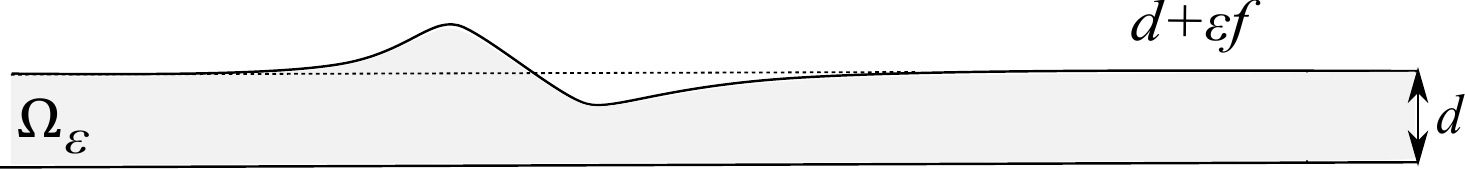}
\caption{Geometry of $\Omega_\varepsilon$.}
\label{Fig-1}
\end{figure}
Let the interaction strength $\aa > 0$ be fixed. We are interested in the spectral properties of the self-adjoint Schr\"odinger operator $\sfH_{\aa,\eps}$ acting in the Hilbert space $L^2(\dR^2)$ and defined by
\begin{equation}\label{eq:operator}
	\sfH_{\aa,\eps}u := -\Delta u -\aa\chi_{\Omg_\eps} u,\qquad\dom\sfH_{\aa,\eps} := H^2(\dR^2),
\end{equation}
where $\chi_{\Omg_\eps}\colon\dR^2\arr\dR$ is the characteristic function of the open set $\Omg_\eps$, that is, $\chi_{\Omg_\eps}(x) = 1$ for $x\in\Omg_\eps$ and $\chi_{\Omg_\eps}(x) = 0$ for $x\notin\Omg_\eps$.
The operator $\sfH_{\aa,\eps}$ represents
the closed, densely defined, symmetric, and lower-semibounded quadratic form defined in $L^2(\dR^2)$ by
\begin{equation}\label{eq:frhaaeps}
	\frh_{\aa,\eps}[u] := \int_{\dR^2}|\nb u|^2\dd x - \aa\int_{\Omg_\eps}|u|^2\dd x,\qquad \dom\frh_{\aa,\eps} := H^1(\dR^2).
\end{equation}

The free Hamiltonian $\sfH_{\aa,0}$ (referring to $\eps = 0$) can be analysed easily via separation of variables. In this case the underline planar strip $\Omg_0$ is straight. We introduce auxiliary simple one-dimensional Schr\"odinger operators
in the Hilbert space $L^2(\dR)$ as
\begin{equation}\label{eq-onedinHamil}
\begin{aligned}
\sfh_0 \psi & := -\psi'',\qquad
&\dom\sfh_0 &:= H^2(\dR),\\
\sfh_\aa \psi &:= -\psi'' -\aa\chi_{[0,d]}\psi,\qquad
&\dom \sfh_\aa &:= H^2(\dR),
\end{aligned}
\end{equation}
where $\chi_{[0,d]}\colon\dR\arr\dR$ is the characteristic function of the interval $[0,d]$. Clearly, we have $\s(\sfh_0) = \sess(\sfh_0) = [0,\infty)$. It follows from \cite[Lem. 9.35]{Te} that the essential spectrum of $\sfh_\aa$ coincides with the positive half-line, $\sigma_{\mathrm{ess}}(\sfh_\aa)=[0, \infty)$; what is important for us is the presence of the discrete spectrum. As a consequence of~\cite[Thm. 2.5]{Si76} the negative discrete spectrum of $\sfh_\aa$ is non-empty and by~\cite[Thm. 10.12.1 (8)-(iii)]{Z05} all the negative eigenvalues of $\sfh_\aa$ are simple. In view of \cite[Cor. 9.43]{Te} the dimension of the spectral subspace of $\sfh_\aa$ corresponding to the negative spectrum is finite and we denote by $\{\mu_n\}_{n=1}^N$, $N\in\dN$, the sequence of negative eigenvalues of $\sfh_\aa$, arranged in the ascending order, and by $\{v_n\}_{n=1}^N$ the corresponding real-valued eigenfunctions belonging to $H^2(\dR)$ and normalized in $L^2(\dR)$. It is easy to see that all the eigenfunctions $\{v_n\}_{n=1}^N$ are bounded.
For the sake of convenience we extend the sequence $\{\mu_n\}_{n=1}^N$ up to an infinite one by setting $0 =: \mu_{N+1} = \mu_{N+2} = \dots$.

The Hamiltonian $\sfH_{\aa,0}$ can be represented as the closure of $\sfh_\aa\otimes \sfI + \sfI\otimes \sfh_0$, where the tensor products
are understood with respect to the decomposition $L^2(\dR^2) = L^2(\dR)\otimes L^2(\dR)$ corresponding to the axes $x_1$ and $x_2$. Hence~\cite[Cor. 7.25]{S12} yields that
\begin{equation*}\label{key}
	\s(\sfH_{\aa,0})=\sess(\sfH_{\aa,0}) = [\mu_1,\infty).
\end{equation*}
We adopt the notation
\[
	\lm_1^\aa(\eps) := \inf\s(\sfH_{\aa,\eps})
\]
for the lowest spectral point of $\sfH_{\aa,\eps}$. We also denote by $N_\aa(\eps)$ the dimension of the spectral subspace of $\sfH_{\aa,\eps}$ corresponding to the interval $(-\infty,\mu_1)$.
The essential spectrum of $\sfH_{\aa,\eps}$ coincides with that of $\sfH_{\aa,0}$ and can be explicitly characterised as $\sess(\sfH_{\aa,\eps}) = [\mu_1,\infty)$ (see Proposition~\ref{ess}).

\subsection{Main results}\label{s:main}
As indicated above, the topic of this paper are the spectral properties of $\sfH_{\aa,\eps}$ in the limit $\eps \to 0$.
Our first result concerns the existence and asymptotic properties of the bound state induced by weak deformation in the non-critical regime $\int_\dR f(x_1)\,\dd x_1 \ne 0$.
\begin{theorem}\label{thm}
Let $\eps \in (0,\eps_0]$ and let the self-adjoint operator $\sfH_{\aa,\eps}$	in the Hilbert space $L^2(\dR^2)$ be defined as in~\eqref{eq:operator}. Then the following claims hold:
\begin{myenum}
\item If $\int_\dR f(x_1)\,\dd x_1 > 0$, then $N_\aa(\eps) = 1$ for all sufficiently small $\eps > 0$ and the lowest eigenvalue of $\sfH_{\aa,\eps}$
admits the asymptotic expansion
\begin{equation}\label{eq:asymptoticslm}
	\lm_1^\aa(\eps) = \mu_1 -\eps^2\left(\frac{\aa^2v_1^4(d)}{4}\right)\left(\int_\dR f(x_1)\,\dd x_1\right)^2 + \OO(\eps^3),\qquad \eps\arr0,
\end{equation}
where $v_1(d)$ is the value at the point $x=d$ of the real-valued normalized ground-state eigenfunction of $\sfh_\aa$ (defined in~\eqref{eq-onedinHamil}).
\item If $\int_{\dR} f(x_1) \,\dd x_1 < 0$, then $N_\aa(\eps) = 0$ for all sufficiently small $\eps > 0$, that is, for small $\eps > 0$ the discrete spectrum of $\sfH_{\aa,\eps}$ below $\mu_1$ is empty.
\end{myenum}
\end{theorem}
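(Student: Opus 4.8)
The plan is to reduce the eigenvalue problem for $\sfH_{\aa,\eps}$ below the threshold $\mu_1$ to a Birman--Schwinger condition for the perturbation $\sfH_{\aa,\eps} = \sfH_{\aa,0} - \aa u_\eps$, where $u_\eps := \chi_{\Omg_\eps} - \chi_{\Omg_0}$ is supported on the thin symmetric difference $\Delta_\eps := (\Omg_\eps\setminus\Omg_0)\cup(\Omg_0\setminus\Omg_\eps)$ and takes values in $\{-1,0,1\}$. Writing $u_\eps = |u_\eps|^{1/2}\,J\,|u_\eps|^{1/2}$ with $J := \operatorname{sgn}(u_\eps)$ and setting, for $\lambda < \mu_1$,
\[
T_\eps(\lambda) := \aa\,|u_\eps|^{1/2}(\sfH_{\aa,0}-\lambda)^{-1}|u_\eps|^{1/2} \ge 0,
\]
a standard computation shows that $\lambda<\mu_1$ is an eigenvalue of $\sfH_{\aa,\eps}$ if and only if $1$ is an eigenvalue of the self-adjoint operator $\mathcal{B}_\eps(\lambda) := T_\eps(\lambda)^{1/2}\,J\,T_\eps(\lambda)^{1/2}$, with matching multiplicities; since $u_\eps$ has compact support, $T_\eps(\lambda)$ is Hilbert--Schmidt. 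First I would record this equivalence and observe that $\lambda\mapsto T_\eps(\lambda)$ is operator monotone increasing, so that $N_\aa(\eps)$ equals the number of eigenvalues of $\mathcal{B}_\eps(\lambda)$ that reach the value $1$ as $\lambda$ decreases from $\mu_1$.

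Next I would extract the singular part of the resolvent as $\lambda\to\mu_1^-$. Using $\sfH_{\aa,0} = \sfh_\aa\otimes\sfI + \sfI\otimes\sfh_0$ and the spectral decomposition of $\sfh_\aa$, the kernel of $(\sfH_{\aa,0}-\lambda)^{-1}$ is a sum over the transverse eigenfunctions $v_n$ and the transverse continuous spectrum of terms $v_n(x_2)v_n(y_2)\,\tfrac{1}{2\sqrt{\mu_n-\lambda}}e^{-\sqrt{\mu_n-\lambda}\,|x_1-y_1|}$. With $\eta := \sqrt{\mu_1-\lambda}\to0^+$ only the $n=1$ term is singular, its leading behaviour being $\tfrac{1}{2\eta}\,v_1(x_2)v_1(y_2)$. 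This yields the decomposition $T_\eps(\lambda) = \tfrac{\aa}{2\eta}\,|g_\eps\rangle\langle g_\eps| + \widetilde T_\eps(\lambda)$, where $g_\eps(x) := |u_\eps(x)|^{1/2}v_1(x_2)$ and the remainder $\widetilde T_\eps(\lambda)$ has a kernel that stays bounded on $\Delta_\eps$ uniformly as $\eta\to0$; since $|\Delta_\eps| = \OO(\eps)$, one gets $\|\widetilde T_\eps(\lambda)\| \le \|\widetilde T_\eps(\lambda)\|_{\mathrm{HS}} = \OO(\eps)$ and likewise $\|g_\eps\|^2 = \int_{\dR^2}|u_\eps|\,v_1^2\,\dd x = \OO(\eps)$.

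The leading eigenvalue of $\mathcal{B}_\eps(\lambda)$ is then governed by this rank-one singular term: projecting onto the normalized direction $g_\eps/\|g_\eps\|$ gives leading eigenvalue
\[
\frac{\aa}{2\eta}\,\langle g_\eps, J g_\eps\rangle = \frac{\aa}{2\eta}\int_{\dR^2} u_\eps(x)\,v_1(x_2)^2\,\dd x = \frac{\aa}{2\eta}\left(\eps\, v_1(d)^2\!\int_\dR f(x_1)\,\dd x_1 + \OO(\eps^2)\right),
\]
where I used that $\Delta_\eps$ hugs the line $x_2=d$, so $\int_d^{d+\eps f(x_1)}v_1^2\,\dd x_2 = \eps f(x_1)v_1(d)^2 + \OO(\eps^2)$ with the sign of $f$. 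The sign of $\int_\dR f\,\dd x_1$ now drives the dichotomy. If $\int_\dR f\,\dd x_1>0$ the singular eigenvalue is positive and tends to $+\infty$ as $\eta\to0$, while by the previous paragraph every other eigenvalue of $\mathcal{B}_\eps(\lambda)$ is $\OO(\eps)$; hence exactly one eigenvalue crosses $1$, and by monotonicity the crossing point $\lm_1^\aa(\eps)$ is unique, giving $N_\aa(\eps)=1$. Equating the leading eigenvalue to $1$ yields $\eta = \tfrac{\aa}{2}\eps\, v_1(d)^2\!\int_\dR f\,\dd x_1 + \OO(\eps^2)$, whence $\mu_1-\lm_1^\aa(\eps)=\eta^2 = \tfrac{\aa^2}{4}v_1(d)^4\big(\!\int_\dR f\,\dd x_1\big)^2\eps^2 + \OO(\eps^3)$, which is precisely \eqref{eq:asymptoticslm}. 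If instead $\int_\dR f\,\dd x_1<0$ the singular eigenvalue is negative, so for small $\eps$ all eigenvalues of $\mathcal{B}_\eps(\lambda)$ lie below $1$ for every $\lambda<\mu_1$, the Birman--Schwinger condition is never met, and $N_\aa(\eps)=0$, which is claim (ii).

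To upgrade the leading order to the stated $\OO(\eps^3)$ remainder I would perform a Feshbach--Schur reduction of the equation $\mathcal{B}_\eps(\lambda)\omega=\omega$ onto the one-dimensional singular subspace, producing a scalar equation whose right-hand side is $\tfrac{\aa}{2\eta}\langle g_\eps, J g_\eps\rangle$ plus a remainder involving $\widetilde T_\eps(\lambda)$ and $\OO(\eta)$ corrections, jointly smooth in $(\eps,\eta)$, and then solve for $\eta=\eta(\eps)$ by the implicit function theorem, checking that the corrections from the expansion of $e^{-\eta|x_1-y_1|}$, from the $\OO(\eps^2)$ term in $\langle g_\eps, J g_\eps\rangle$, and from $\widetilde T_\eps$ all contribute at order $\eps^3$ in $\lambda$. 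The eigenfunction asymptotics follow by inserting $\eta(\eps)$ into $\phi_\eps = \aa(\sfH_{\aa,0}-\lm_1^\aa(\eps))^{-1}u_\eps\phi_\eps$, whose leading term is a multiple of $v_1(x_2)e^{-\eta|x_1|}$. The main obstacle is controlling $\widetilde T_\eps(\lambda)$ uniformly in the coupled limit $\eps,\eta\to0$ (the two scales are linked by $\eta\sim\eps$), together with the care needed because $u_\eps$ is sign-indefinite when $f$ changes sign, so $\mathcal{B}_\eps(\lambda)$ must be treated as a self-adjoint operator on a Krein-space-type decomposition rather than a positive Birman--Schwinger operator; a secondary technical point is verifying that the transverse continuous-spectrum part of the resolvent contributes only to the $\OO(\eps)$ remainder.
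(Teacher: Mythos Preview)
Your overall strategy coincides with the paper's: write $\sfH_{\aa,\eps}=\sfH_{\aa,0}-U_{\aa,\eps}$, apply the Birman--Schwinger principle, split the sandwiched resolvent into the singular rank-one contribution coming from the lowest transverse channel $v_1$ and a remainder that is $\OO(\eps)$ in operator norm uniformly in the spectral parameter, and reduce the eigenvalue condition to a scalar equation whose solution gives~\eqref{eq:asymptoticslm}. The paper works with the (non-self-adjoint) operator $\sign(U_{\aa,\eps})|U_{\aa,\eps}|^{1/2}\wh\sfR_{\aa,0}(\dl)|U_{\aa,\eps}|^{1/2}=\sfL_{\eps,\dl}+\sfN_{\eps,\dl}$ rather than your symmetrized $T_\eps^{1/2}JT_\eps^{1/2}$; this makes the rank-one structure and the passage to the scalar equation $F(\eps,\dl)=1$ (your Feshbach--Schur step) more transparent, since no operator square roots have to be controlled.

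There is, however, a genuine gap in your remainder estimate. You claim that the kernel of $\widetilde T_\eps(\lambda)$ ``stays bounded on $\Delta_\eps$ uniformly as $\eta\to0$'' and deduce $\|\widetilde T_\eps(\lambda)\|_{\mathrm{HS}}=\OO(\eps)$ from $|\Delta_\eps|=\OO(\eps)$. But the two-dimensional Green function of $\sfH_{\aa,0}$ has a logarithmic singularity on the diagonal (it differs from the free resolvent kernel $\tfrac{1}{2\pi}K_0(\kappa|x-y|)$ only by a bounded perturbation), and subtracting the smooth rank-one piece $\tfrac{1}{2\eta}v_1(x_2)v_1(y_2)$ does not remove it. Hence the kernel of $\widetilde T_\eps(\lambda)$ is \emph{not} pointwise bounded, and your Hilbert--Schmidt argument as written does not apply. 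This is precisely where the paper invests the most effort: it splits $\sfN_{\eps,\dl}$ further into $\sfM_{\eps,\dl}$, whose kernel $m_\dl(x_1,x_1')v_1(x_2)v_1(x_2')$ \emph{is} bounded (so your HS reasoning works there), and the piece $\sign(U_{\aa,\eps})|U_{\aa,\eps}|^{1/2}\wh\sfR^{II}_{\aa,0}(\dl)|U_{\aa,\eps}|^{1/2}$ that carries the diagonal singularity. For the latter the paper does \emph{not} use a Hilbert--Schmidt bound; instead it proves in Lemma~\ref{lem:R_norm_est} the uniform Sobolev-type estimates $\|\wh\sfR^{II}_{\aa,0}(\dl)\|_{L^2\to L^\infty}\le A_\aa$ and $\|\G_{x_2}(\wh\sfR^{II}_{\aa,0}(\dl))^{1/2}\|_{L^2(\dR^2)\to L^2(\dR)}\le B_\aa$, and then integrates the trace bound over the thin layer $x_2\in[d+\eps\min f,\,d+\eps\max f]$ to obtain the $\OO(\eps)$ operator-norm estimate. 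Once you replace your bounded-kernel claim by this argument (or by a careful HS computation that treats the logarithmic singularity explicitly), the rest of your outline goes through essentially as in the paper.
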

\begin{remark}\label{rem:limit}
	In order to compare the asymptotics in~\eqref{eq:asymptoticslm}  with the main result of~\cite{BGRS97} assume, in addition, that
	the function $f$ satisfying $\int_\dR f(x_1)\dd x_1 > 0$ belongs to $C^\infty_0(\dR)$. Recall that the Dirichlet Laplacian on $\Omg_\eps$ is defined via the first representation as the unique self-adjoint operator in $L^2(\Omg_\eps)$ associated with the quadratic form $H^1_0(\Omg_\eps)\ni u\mapsto \|\nb u\|^2_{L^2(\Omg_\eps;\dC^2)}$.
	It is proved in~\cite[Thm. 2]{BGRS97} (see also~\cite[Thm. 6.5]{EK15})\footnote{In~\cite[Thm. 6.5]{EK15} one needs to replace $f$ by $\frac{1}{d}f$ in the formulation (note a misprint, a missing bracket in eq.~(6.17) of the book).}
	that the lowest eigenvalue $\lm_1^{\rm D}(\Omg_\eps)$ of the Dirichlet Laplacian on $\Omg_\eps$ has the asymptotic expansion
	\begin{equation}\label{eq:BGRS}
		\lm_1^{\rm D}(\Omg_\eps) = \left(\frac{\pi}{d}\right)^2-\eps^2
		\left(\frac{\pi}{d}\right)^4\left(\frac{1}{d}\int_\dR f(x_1)\dd x_1\right)^2 + \cO(\eps^3),\qquad \eps\arr0.
	\end{equation}
	It follows from~\cite[Thm. S.14]{RSI} that $\aa + \sfH_{\aa,\eps}$ converges  as $\aa\arr+\infty$ to the Dirichlet Laplacian on $\Omg_\eps$ in the generalized strong resolvent sense~\cite[\S 9.3]{W00}. Hence
	one gets that $\lim_{\aa\arr\infty}(\aa+\lm_1^\aa(\eps)) = \lm_1^{\rm D}(\Omg_\eps)$. Although, it is not possible to derive from~\eqref{eq:asymptoticslm} the asymptotics in~\eqref{eq:BGRS} without extra information on the remainder, there is still a formal connection between these two asymptotic expansions.
	One can check easily that
	\[
	v_1(d) = \sqrt{\frac{2}{d}}\, \frac{\pi}{d\sqrt{\alpha}} + \OO(\alpha^{-1}),\qquad \mu_1 = -\alpha + \left(\frac{\pi}{d}\right)^2 + \OO(\alpha^{-1/2}),\qquad \alpha\to+\infty,
	\]
	and consequently,
	the first and the second terms in the asymptotic expansion of $\aa+\lm_1^\aa(\eps)$ in~\eqref{eq:asymptoticslm} (with respect to the small parameter $\eps$) converge to the respective terms in the asymptotic expansion of $\lm_1^{\rm D}(\Omg_\eps)$ in~\eqref{eq:BGRS} as $\aa\arr+\infty$.
\end{remark}
\begin{remark} \label{strong(ii)}
The claim (ii) in Theorem~\ref{thm} holds in the weak deformation regime only. To explain the point let us consider a
non-negative function $f_- \in C_0^\infty (\dR )$ (not identically equal to zero)  and the Hamiltonian $\sfH_{\aa, \eps}^-$ with the deformation defined by $\eps f_-$. It follows from Theorem~\ref{thm}\,(i) that there exists $\eps_0>0$ such that $\sfH_{\aa, \eps}^-$ admits a unique discrete eigenvalue for all $\eps \in (0, \eps_0]$. Relying on the min-max principle we conclude that the discrete spectrum is non-empty also for all $\eps> \eps_0$ since $(u,\sfH_{\aa, \eps }^-  u)_{L^2(\dR^2)}\leq ( u, \sfH_{\aa, \eps_0 }^- u)_{L^2(\dR^2)}$ for any $u\in H^2 (\dR^2 )$.
Consider now  a non-negative  function $f_+ \in C_0^\infty (\dR )$ with $\max f_{+}<d$ such that  $\int_{\dR} f_+(x_1) \,\dd x_1 > \int_{\dR} f_-(x_1) \,\dd x_1$  and the Hamiltonian $\sfH_{\aa, 1 }$ with the deformation defined by $f(x)=f_-(x_1) -f_+ (x_1 -\rho )$, $\rho >0$, which means that the assumption of (ii) in Theorem~\ref{thm} is satisfied.
Note that for all $u\in H^2 (\dR^2 )$ we have
\begin{equation}\label{eq:H_conv}
(u,\sfH_{\aa, 1 }^- u)_{L^2(\dR^2)}- ( u,\sfH_{\aa, 1 }u)_{L^2(\dR^2)} = -\alpha \int_{\Omg_0 \setminus  \Omg_1 (\rho )} |u|^2 \dd x\to 0 \,, \qquad \rho  \to \infty \,,
\end{equation}
where $\Omg_1 (\rho ):= \big\{(x_1,x_2)\in\dR^2\colon 0 < x_2 < d-  f_{+} (x_1 -\rho  )\big\}$.
Let $\psi \in H^2 (\dR^2)$ stand for the normalized ground state of  $\sfH_{\aa, 1 }^-$ corresponding to the eigenvalue $\mu<\mu_1$.
Then $(\psi, \sfH_{\aa,  1}\psi )_{L^2(\dR^2)}= \mu +\delta(\rho)$,  where $\delta(\rho)\to 0$ as $\rho\to\infty$ holds in view of \eqref{eq:H_conv}; this shows that the discrete spectrum of $\sfH_{\aa, 1 }$ is non-empty. 
\end{remark}

While the results for soft and Dirichlet waveguides correspond to each other, though, they are obtained by very different means. The analysis of the Dirichlet waveguide in \cite{BGRS97, EK15} relies on the `straightening' of $\Omg_\eps$ using an appropriate change of the coordinate system. Upon this geometric transformation the geometry of the system encoded in the function $f$ enters into coefficients in the differential expression of the Hamiltonian. In contrast, for the soft waveguides considered here we use another technique: to prove Theorem~\ref{thm} we employ a modification of the Birman-Schwinger principle in order to derive an implicit scalar equation for the lowest eigenvalue. The remaining analysis reduces then to the study of this equation.

As our second result we obtain an expansion of the eigenfunction of $\sfH_{\aa,\eps}$ associated to its lowest eigenvalue in the weak deformation limit of the regime $\int_\dR f(x_1)\,\dd x_1 > 0$. In order to formulate this result we introduce the function
$V_{\aa,\eps} = \sqrt{\aa}\big(\chi_{\Omg_\eps} - \chi_{\Omg_0}\big)$.
\begin{theorem}\label{thm:ef}
	Let $\eps \in (0,\eps_0]$ and let the self-adjoint operator $\sfH_{\aa,\eps}$ in the Hilbert space $L^2(\dR^2)$ be defined as in~\eqref{eq:operator}. Assume that $\int_\dR f(x_1)\,\dd x_1 > 0$ holds. For all sufficiently small $\eps > 0$ the eigenfunction corresponding to the unique simple eigenvalue $\lm_1^\aa(\eps)$ of $\sfH_{\aa,\eps}$ can be represented as
	\begin{equation}
	\label{eq:expansion_ef}
		\psi_\eps(x) = u_\eps(x) + v_\eps(x),
	\end{equation}
	where the leading contribution is
	\[
		u_\eps(x) := \frac{v_1(x_2)}{\sqrt{\eps}}\int_{\dR^2} e^{-\wh\dl(\eps)|x_1-x_1'|} v_1^2(x_2')V_{\aa,\eps}(x')\,\dd x'
		\qquad\text{with}\quad
		\wh\dl(\eps) := \eps\left(
		\frac{\aa v_1^2(d)}{2}
		\right)\int_{\dR}f(x_1)\,\dd x_1
	\]
	and the remainder  $v_\eps$ is given by \eqref{uv_def} below. Moreover, the norms of the two terms in the expansion~\eqref{eq:expansion_ef} are
    given by
	\[
		\|u_\eps\|_{L^2(\dR^2)} = \sqrt{2}v_1(d)\left[\int_\dR f(x_1)\,\dd x_1\right]^{1/2} + \cO(\sqrt{\eps})\qquad\text{and}\qquad
		\|v_\eps\|_{L^2(\dR^2)} = \cO(\sqrt{\eps}).
	\]
\end{theorem}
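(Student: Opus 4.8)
The plan is to follow the eigenfunction through the same Birman--Schwinger reduction and resolvent decomposition on which the proof of Theorem~\ref{thm} rests. Abbreviate $\lm := \lm_1^\aa(\eps)$ and $\kappa_\eps := \sqrt{\mu_1 - \lm} > 0$. Since $\sfH_{\aa,\eps} = \sfH_{\aa,0} - \aa(\chi_{\Omg_\eps} - \chi_{\Omg_0})$ and $\sqrt{\aa}\,V_{\aa,\eps} = \aa(\chi_{\Omg_\eps}-\chi_{\Omg_0})$, the eigenvalue relation $\sfH_{\aa,\eps}\psi_\eps = \lm\psi_\eps$ is equivalent to
\begin{equation*}
	\psi_\eps = \sqrt{\aa}\,(\sfH_{\aa,0} - \lm)^{-1}V_{\aa,\eps}\psi_\eps = \sqrt{\aa}\,(\sfH_{\aa,0} - \lm)^{-1}\phi_\eps,\qquad \phi_\eps := V_{\aa,\eps}\psi_\eps .
\end{equation*}
Here $\phi_\eps$ is the associated Birman--Schwinger eigenfunction; it is supported in the thin symmetric-difference layer $\Omg_\eps \triangle \Omg_0$, whose width around the line $x_2 = d$ is $\OO(\eps)$. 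I fix the normalization of $\psi_\eps$ (equivalently of $\phi_\eps$) exactly as in the proof of Theorem~\ref{thm}; with this choice the eigenvalue asymptotics~\eqref{eq:asymptoticslm} give $\kappa_\eps = \wh\dl(\eps) + \OO(\eps^2)$, and $\|\phi_\eps\|_{L^2(\dR^2)} = \OO(\eps)$ because $\psi_\eps$ is $\OO(\sqrt{\eps})$ pointwise on the $\OO(\eps)$-measure layer.

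Next I would insert the resolvent splitting dictated by the product structure $\sfH_{\aa,0} = \sfh_\aa \otimes \sfI + \sfI \otimes \sfh_0$. Writing $P_1 := \langle v_1,\,\cdot\,\rangle_{L^2(\dR)}\,v_1$ for the transverse projection onto the ground state of $\sfh_\aa$, one has, for $z$ near $\mu_1$,
\begin{equation*}
	(\sfH_{\aa,0} - z)^{-1} = P_1 \otimes (\sfh_0 + \mu_1 - z)^{-1} + \mathsf{R}_z^{\mathrm{reg}},
\end{equation*}
where the first summand has integral kernel $\tfrac{1}{2\kappa}\,v_1(x_2)\,v_1(x_2')\,e^{-\kappa|x_1 - x_1'|}$ with $\kappa = \sqrt{\mu_1 - z}$ and carries the entire singularity as $z \uparrow \mu_1$, while $\mathsf{R}_z^{\mathrm{reg}}$ stays uniformly bounded on $L^2(\dR^2)$ (the higher transverse modes and the continuous spectrum of $\sfh_\aa$ enter through resolvents evaluated at a uniformly positive distance from the spectrum). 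Evaluating at $z = \lm$ gives the clean decomposition $\psi_\eps = \sqrt{\aa}\,\big(P_1 \otimes (\sfh_0 + \kappa_\eps^2)^{-1}\big)\phi_\eps + \sqrt{\aa}\,\mathsf{R}_\lm^{\mathrm{reg}}\phi_\eps$.

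I would then recover $u_\eps$ from the singular summand by two replacements: first $\kappa_\eps$ by $\wh\dl(\eps)$ in the exponential and the prefactor, and second the source $v_1(x_2')\phi_\eps(x') = v_1(x_2')V_{\aa,\eps}(x')\psi_\eps(x')$ by its leading transverse form, a constant multiple of $v_1^2(x_2')V_{\aa,\eps}(x')$, the constant being absorbed by the chosen normalization. Carrying out both replacements reproduces the stated formula for $u_\eps$, and $v_\eps := \psi_\eps - u_\eps$ is then exactly the expression~\eqref{uv_def}. Each term of $v_\eps$ is bounded in $L^2(\dR^2)$ by $\OO(\sqrt{\eps})$: the regular part $\sqrt{\aa}\,\mathsf{R}_\lm^{\mathrm{reg}}\phi_\eps$ is $\OO(\eps)$ by uniform boundedness of $\mathsf{R}_\lm^{\mathrm{reg}}$ and $\|\phi_\eps\| = \OO(\eps)$; the rate replacement is controlled through the elementary bound $\|e^{-\kappa_\eps|\,\cdot\,|} - e^{-\wh\dl(\eps)|\,\cdot\,|}\|_{L^2(\dR)} = \OO(\sqrt{\eps})$; and the transverse replacement is handled by the continuity of $v_1$ across the $\OO(\eps)$-layer together with the leading-order matching built into the normalization. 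Collecting the three contributions yields $\|v_\eps\|_{L^2(\dR^2)} = \OO(\sqrt{\eps})$.

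Finally I would compute $\|u_\eps\|_{L^2(\dR^2)}$ directly from the explicit formula. Factoring out $\|v_1\|_{L^2(\dR)} = 1$ reduces the norm to a one-dimensional integral; using the layer estimate $\int_\dR v_1^2(x_2')\,V_{\aa,\eps}(x_1',x_2')\,\dd x_2' = \sqrt{\aa}\,v_1^2(d)\,\eps\, f(x_1') + \OO(\eps^2)$ (continuity of $v_1^2$) and the convolution asymptotics
\begin{equation*}
	\int_\dR\Big(\int_\dR e^{-\wh\dl(\eps)|x_1 - x_1'|}\,f(x_1')\,\dd x_1'\Big)^2\,\dd x_1 = \frac{1}{\wh\dl(\eps)}\Big(\int_\dR f(x_1)\,\dd x_1\Big)^2 + \OO(1),
\end{equation*}
which follows by Plancherel from the transform $2\wh\dl(\eps)/(\wh\dl(\eps)^2 + \xi^2)$ of $e^{-\wh\dl(\eps)|\,\cdot\,|}$ and the compact support of $f$, together with the identity $\eps/\wh\dl(\eps) = 2/(\aa\, v_1^2(d)\int_\dR f\,\dd x_1)$, I obtain $\|u_\eps\|_{L^2(\dR^2)}^2 = 2\,v_1^2(d)\int_\dR f\,\dd x_1 + \OO(\sqrt{\eps})$, which is the claimed leading term. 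The main obstacle throughout is the long-tail $L^2$ bookkeeping: $\psi_\eps$ decays in $x_1$ only on the scale $\wh\dl(\eps)^{-1} \sim \eps^{-1}$, so the pointwise smallness of the correction terms is repeatedly turned into a larger $L^2$ contribution by integration over this long tail, and one has to keep the transverse-profile replacement uniform in $x_1'$ over $\mathrm{supp}\,f$ while respecting that amplification.
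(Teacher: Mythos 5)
You follow essentially the same route as the paper: the Birman--Schwinger relation $\psi_\eps=\sqrt{\aa}\,(\sfH_{\aa,0}-\lm_1^\aa(\eps))^{-1}V_{\aa,\eps}\psi_\eps$, the splitting of the free resolvent into the rank-one transverse ground-state part (the paper's $\wh\sfR^{I}_{\aa,0}$) plus a regular remainder (the paper's $\wh\sfR^{II}_{\aa,0}$), the rate replacement $\dl(\eps)\to\wh\dl(\eps)$, and a direct computation of $\|u_\eps\|_{L^2(\dR^2)}$. The pieces you actually carry out are correct: the bound $\|e^{-\dl(\eps)|\cdot|}-e^{-\wh\dl(\eps)|\cdot|}\|_{L^2(\dR)}=\cO(\sqrt{\eps})$ holds, and your Plancherel computation does give $\|u_\eps\|^2_{L^2(\dR^2)}=2v_1^2(d)\int_\dR f\,\dd x_1+o(1)$.

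The genuine gap is in the remainder estimate, and it is not a technicality --- it is where the work of the proof lies. You control the regular part only through ``uniform boundedness on $L^2(\dR^2)$'', and you justify the transverse source replacement by continuity of $v_1$ plus the \emph{asserted} pointwise bound $\psi_\eps=\cO(\sqrt{\eps})$ on the layer. Neither suffices. Consider the cross term (the paper's $b_\eps''$): the singular kernel $\tfrac{1}{2\dl(\eps)}v_1(x_2)v_1(x_2')e^{-\dl(\eps)|x_1-x_1'|}$ applied to the source $V_{\aa,\eps}\cdot\big(\wh\sfR^{II}_{\aa,0}(\dl(\eps))\phi_\eps\big)$. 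Since the $L^1\to L^2$ norm of convolution with $\tfrac{1}{2\dl}e^{-\dl|\cdot|}$ is of order $\dl^{-3/2}$, this term is of size $\dl(\eps)^{-3/2}\,\|V_{\aa,\eps}\,\wh\sfR^{II}_{\aa,0}(\dl(\eps))\phi_\eps\|_{L^1(\dR^2)}$ in $L^2$. With only Cauchy--Schwarz and the $L^2$ bound you allow yourself, $\|V_{\aa,\eps}\,\wh\sfR^{II}_{\aa,0}\phi_\eps\|_{L^1}\le\|V_{\aa,\eps}\|_{L^2}\|\wh\sfR^{II}_{\aa,0}\phi_\eps\|_{L^2}\lesssim\sqrt{\eps}\cdot\eps=\eps^{3/2}$, so after the $\dl(\eps)^{-3/2}\sim\eps^{-3/2}$ tail amplification this contribution is only $\cO(1)$ --- the same order as $u_\eps$ itself, not $\cO(\sqrt{\eps})$. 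Hence $\|v_\eps\|_{L^2(\dR^2)}=\cO(\sqrt{\eps})$ does not follow from what you wrote. The paper closes exactly this hole with Lemma~\ref{lem:R_norm_est}: the $L^2\to L^\infty$ bound on $\wh\sfR^{II}_{\aa,0}(\dl)$ (via the $H^2$-embedding) and the uniform trace bound on its square root (via $H^1$-traces), uniform in $\dl$. These show that the regular resolvent of a layer-supported source is uniformly small \emph{on the layer}, upgrading the bound to $\|V_{\aa,\eps}\,\wh\sfR^{II}_{\aa,0}\phi_\eps\|_{L^1}\le\sqrt{\aa}\,|\supp V_{\aa,\eps}|\,\|\wh\sfR^{II}_{\aa,0}\phi_\eps\|_{L^\infty}=\cO(\eps^2)$ --- precisely the extra factor $\sqrt{\eps}$ needed. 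The same refined bounds, or alternatively the exact fixed-point identity $\phi_\eps=\mathrm{const}\cdot(\sfI-\sfN_{\eps,\dl(\eps)})^{-1}\omg_{\aa,\eps}$ which the paper uses and you do not, are also what would justify your input $\|\phi_\eps\|_{L^2}=\cO(\eps)$; as written, that bound rests on pointwise information about $\psi_\eps$ near the layer that you never establish, so your argument is circular at this point as well.
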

The proof of Theorem~\ref{thm:ef} relies on the Birman-Schwinger principle combined with Theorem~\ref{thm}\,(i). The main idea is to decompose the Birman-Schwinger operator in a convenient way; this leads to non-trivial technical estimates as we will see in Section~\ref{s:ef_proof} below.

In our last result we deal with the critical case, $\int_{\dR} f(x_1) \,\dd x_1 = 0$. We obtain a sufficient condition on the function $f$ in terms of coupling constant $\aa$ and strip width $d$ under which the bound state of $\sfH_{\aa,\eps}$ exists for all sufficiently small $\eps > 0$.
\begin{theorem}\label{thm:existence}
	Let $\eps \in (0,\eps_0]$ and let the self-adjoint operator $\sfH_{\aa,\eps}$ in the Hilbert space $L^2(\dR^2)$ be defined as in~\eqref{eq:operator}. Assume, in addition, that the compactly supported function $f$
 is such that  $f\in W^{1,\infty}(\dR)$ and that $\int_\dR f(x_1)\,\dd x_1 = 0$. If the condition
	\begin{equation}\label{eq:condition_existence}
		\frac{\int_{\dR}|f'(x_1)|^2\,\dd x_1}{\int_{\dR}|f(x_1)|^2\,\dd x_1} <
		\frac{\aa|v_1(d)|^2}{\sqrt{-\mu_1}}
	\end{equation}
is satisfied, then the operator $\sfH_{\aa,\eps}$ has a unique simple eigenvalue below the bottom of the essential spectrum for all sufficiently small $\eps > 0$.
\end{theorem}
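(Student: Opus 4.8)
The plan is to prove existence by the min-max principle, exhibiting a trial function $u\in H^1(\dR^2)$ (depending on $\eps$) with $\frh_{\aa,\eps}[u] < \mu_1\|u\|^2_{L^2(\dR^2)}$; since $\sess(\sfH_{\aa,\eps}) = [\mu_1,\infty)$, this forces a spectral point below $\mu_1$, which is then necessarily a discrete eigenvalue. I would look for $u$ among pure product functions $u(x) = v_1(x_2)\phi(x_1)$, which separates the problem: using $\langle v_1,\sfh_\aa v_1\rangle = \mu_1$ and $\|v_1\|=1$ one gets $\frh_{\aa,0}[v_1\phi] - \mu_1\|v_1\phi\|^2 = \|\phi'\|^2_{L^2(\dR)}$, while the deformation contributes $-\aa\int_{\dR^2}(\chi_{\Omg_\eps}-\chi_{\Omg_0})|v_1\phi|^2\,\dd x = -\int_\dR g_\eps(x_1)\,\phi(x_1)^2\,\dd x_1$ with the signed effective longitudinal potential $g_\eps(x_1) := \aa\int_d^{d+\eps f(x_1)}v_1(x_2)^2\,\dd x_2$. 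Thus the task reduces to showing that the one-dimensional Schr\"odinger operator $-\partial_{x_1}^2 - g_\eps$ has negative infimum for all small $\eps>0$.

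The essential structural input comes from Taylor-expanding $g_\eps$. Writing $k := \sqrt{-\mu_1}$ and using that $v_1$ decays as $v_1(d)e^{-k(x_2-d)}$ for $x_2>d$, so that $v_1'(d) = -k\,v_1(d)$ and $(v_1^2)'(d) = -2k\,v_1(d)^2$, one finds $g_\eps = \eps\gamma_1 f + \eps^2\gamma_2 f^2 + \OO(\eps^3)$ pointwise, with $\gamma_1 = \aa v_1(d)^2$ and $\gamma_2 = \tfrac12\aa(v_1^2)'(d) = -\aa k\,v_1(d)^2$. Because $\int_\dR f\,\dd x_1 = 0$, the first-order term integrates to zero and $\int_\dR g_\eps\,\dd x_1 = \eps^2\gamma_2\int_\dR f^2\,\dd x_1 + \OO(\eps^3)$; since $\gamma_2<0$, the effective potential $-g_\eps$ has nonnegative integral at leading order, so binding cannot come from its total mass and must be extracted from the shape of its $\OO(\eps)$ part.

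To capture this I would test with $\phi = \phi_0 + \eps c f$, where $c\in\dR$ is a parameter and $\phi_0$ is a plateau cut-off equal to $1$ on a neighbourhood of $\mathrm{supp}\, f$. Because $\phi_0\equiv 1$ there and $\int_\dR f = 0$, all cross terms vanish exactly, and a short computation gives
\[
	Q[\phi] := \frh_{\aa,\eps}[v_1\phi] - \mu_1\|v_1\phi\|^2 = \|\phi_0'\|^2 + \eps^2\Big(c^2\|f'\|^2 - 2c\,\gamma_1\|f\|^2 - \gamma_2\|f\|^2\Big) + \OO(\eps^3),
\]
with all norms in $L^2(\dR)$. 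Minimising the bracket over $c$ at $c_\ast = \gamma_1\|f\|^2/\|f'\|^2$ yields the value $-\gamma_1^2\|f\|^4/\|f'\|^2 - \gamma_2\|f\|^2$, which is strictly negative precisely when $\aa v_1(d)^2\|f\|^2/\|f'\|^2 > k$, i.e. exactly under the hypothesis~\eqref{eq:condition_existence}. Finally I would let the plateau widen with $\eps$ so that $\|\phi_0'\|^2 = o(\eps^2)$ (its width and mass may blow up, which is harmless); then $Q[\phi] < 0$ for all sufficiently small $\eps$, proving the existence of an eigenvalue below $\mu_1$. Its uniqueness and simplicity I would obtain, as in the non-critical case, from the Birman--Schwinger analysis underlying Theorem~\ref{thm}, which shows $N_\aa(\eps)\le 1$ for small $\eps$.

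The main obstacle is making the $\OO(\eps^3)$ remainder rigorous and uniform: one must control the Taylor remainder of $x_1\mapsto\int_d^{d+\eps f(x_1)}v_1^2\,\dd x_2$ (for which $f\in W^{1,\infty}(\dR)$ and the $W^{2,\infty}$-regularity of $v_1$ near $x_2=d$ are exactly what is needed), verify that the cross terms between $\phi_0$ and $\eps c f$ truly cancel, and coordinate the $\eps$-dependence of the plateau so that its kinetic cost is dominated by the gain of order $\eps^2$. One should also note that the resulting condition is only sufficient: a longitudinal profile proportional to a primitive of $f$ rather than to $f$ lowers the threshold, so~\eqref{eq:condition_existence} is not sharp.
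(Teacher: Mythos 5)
Your proposal is correct and takes essentially the same route as the paper: your trial function $v_1(x_2)\big[\phi_0(x_1)+\eps c f(x_1)\big]$ is exactly the paper's $\chi(\eps^3 x_1)\big[1+\lm\eps f(x_1)\big]v_1(x_2)$ (the cutoff equals $1$ on $\supp f$, and the paper's $\eps^3$-scaling is one concrete realization of your widening plateau), and the Taylor expansion of the transverse integral together with $v_1'(d)=-\sqrt{-\mu_1}\,v_1(d)$ and optimization over $c=\lm$ produces the identical condition~\eqref{eq:condition_existence}. Uniqueness and simplicity are likewise obtained in the paper exactly as you suggest, from the Birman--Schwinger bound $N_\aa(\eps)\le 1$ of Proposition~\ref{prop:bs_bnd}.
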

The sufficient condition~\eqref{eq:condition_existence} is reminiscent of the one obtained in~\cite[Thm.~2]{EV97}, see also \cite[Thm.~6.9]{EK15}, for the Dirichlet Laplacian on $\Omg_\eps$ in the critical case and it shares with it the property that one can apply it to perturbations elongated enough: given a non-zero real-valued compactly supported function $f\in W^{1,\infty}(\dR)$ with $\int_{\dR} f(x_1)\,\dd x_1 = 0$ one can satisfy the condition~\eqref{eq:condition_existence} for perturbations described by $f_\gg(x) := f(\gg x)$ with a sufficiently small $\gg > 0$.
On the other hand, a comparison with the sufficient condition in~\cite{EV97} analogous to the one in Remark~\ref{rem:limit} is not possible here; note that the right-hand side of ~\eqref{eq:condition_existence} is of order of $\cO(\alpha^{-1/2})$ as $\aa\to+\infty$.

The proof of Theorem~\ref{thm:existence} given in Section~\ref{s:crit} is purely variational. We construct a suitable trial function, which depends on a parameter, taking inspiration in the trial function used in~\cite{EV97}; optimizing the obtained condition with respect to the parameter we get~\eqref{eq:condition_existence}. This result leaves some question open. It concerns not only a sufficient condition allowing for comparison with a critical Dirichlet strip, but also conditions ensuring the absence of the discrete spectrum for a fixed critical $f$ and large $d$ similar to what is known in the Dirichlet case~\cite{EV97}.

\setcounter{equation}{0}
\section{Preliminaries}

\subsection{Essential spectrum}

The essential spectrum of $\sfH_{\aa,\eps}$ can be determined using a compact perturbation argument. In the following we use the function
\[
	U_{\aa,\eps} := \aa\big(\chi_{\Omg_\eps} - \chi_{\Omg_0}\big).
\]
It is straightforward to see that $\sfH_{\aa,\eps} = \sfH_{\aa,0} - U_{\aa,\eps}$ and that $V_{\aa,\eps} = \sign(U_{\aa,\eps})|U_{\aa,\eps}|^{1/2}$.
\begin{proposition}\label{ess}
	Let $\eps\in [0,\eps_0]$ and let the self-adjoint operator $\sfH_{\aa,\eps}$ in the Hilbert space $L^2(\dR^2)$ be defined as in~\eqref{eq:operator}, then we have $\sess(\sfH_{\aa,\eps})= [\mu_1,\infty)$.
\end{proposition}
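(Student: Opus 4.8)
The plan is to realise $\sfH_{\aa,\eps}$ as a compact perturbation (in the resolvent sense) of the free Hamiltonian $\sfH_{\aa,0}$ and then to invoke the stability of the essential spectrum, using the already-established identity $\sess(\sfH_{\aa,0}) = [\mu_1,\infty)$. By construction $\sfH_{\aa,\eps} = \sfH_{\aa,0} - U_{\aa,\eps}$, so everything reduces to showing that the difference of the resolvents of the two operators is compact.

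First I would record that $U_{\aa,\eps} = \aa\big(\chi_{\Omg_\eps}-\chi_{\Omg_0}\big)$ is a bounded, real-valued multiplication operator with \emph{compact} support. Indeed, $\chi_{\Omg_\eps}$ and $\chi_{\Omg_0}$ differ only on the set of points $(x_1,x_2)$ with $x_1\in\operatorname{supp} f$ and $x_2$ lying between $d$ and $d+\eps f(x_1)$. Since $f$ is continuous with compact support, say $\operatorname{supp} f\subset[-R,R]$ and $|f|\le M$, this set is contained in the bounded rectangle $[-R,R]\times[d-\eps_0 M,\,d+\eps_0 M]$. Write $W:=U_{\aa,\eps}$ for this bounded compactly supported multiplier and fix an open ball $B\subset\dR^2$ containing its support. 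Since $W$ is bounded, $\sfH_{\aa,\eps}$ is self-adjoint on the same domain $H^2(\dR^2)$, so for $z\in\dC\setminus\dR$ both resolvents are bounded.

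The core step is to prove that $W(\sfH_{\aa,0}-z)^{-1}$ is compact. Because $\dom\sfH_{\aa,0}=H^2(\dR^2)$, the resolvent maps $L^2(\dR^2)$ boundedly into $H^2(\dR^2)$, and I would factor $W(\sfH_{\aa,0}-z)^{-1}$ through the chain
\[
	L^2(\dR^2) \xrightarrow{(\sfH_{\aa,0}-z)^{-1}} H^2(\dR^2) \xrightarrow{\;r_B\;} H^2(B) \hookrightarrow L^2(B) \xrightarrow{\;\times W\;} L^2(\dR^2),
\]
where $r_B$ is the bounded restriction to $B$, the middle embedding is compact by the Rellich--Kondrachov theorem on the bounded smooth domain $B$, and multiplication by the bounded, $B$-supported function $W$ is bounded. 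The composition of a compact map with bounded maps on either side is compact, so $W(\sfH_{\aa,0}-z)^{-1}$ is compact.

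Finally, the resolvent identity
\[
	(\sfH_{\aa,\eps}-z)^{-1}-(\sfH_{\aa,0}-z)^{-1} = (\sfH_{\aa,\eps}-z)^{-1}\,W\,(\sfH_{\aa,0}-z)^{-1}
\]
exhibits the difference of resolvents as a product in which the middle factor is compact and the outer factors are bounded; hence this difference is compact. By Weyl's theorem on the invariance of the essential spectrum under compact perturbations of the resolvent, $\sess(\sfH_{\aa,\eps})=\sess(\sfH_{\aa,0})=[\mu_1,\infty)$. The only delicate point — and the main obstacle — is the verification of the compactness of $W(\sfH_{\aa,0}-z)^{-1}$; this rests entirely on the compactness of the Sobolev embedding $H^2(B)\hookrightarrow L^2(B)$ together with the fact that $f$ is bounded and compactly supported, which makes $W$ compactly supported in \emph{both} variables and thereby confines the perturbation to a bounded region.
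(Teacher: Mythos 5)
Your proof is correct and takes essentially the same approach as the paper: both exhibit $\sfH_{\aa,\eps}$ as a perturbation of $\sfH_{\aa,0}$ by the bounded, compactly supported multiplier $U_{\aa,\eps}$, establish that this perturbation is relatively compact, and conclude by Weyl-type stability of the essential spectrum that $\sess(\sfH_{\aa,\eps})=\sess(\sfH_{\aa,0})=[\mu_1,\infty)$. The only difference is one of packaging: the paper cites a known theorem for the relative compactness with respect to the free Laplacian and transfers it to $\sfH_{\aa,0}$ as a bounded perturbation, whereas you prove the compactness of $U_{\aa,\eps}(\sfH_{\aa,0}-z)^{-1}$ from scratch via the Rellich--Kondrachov embedding and then pass through the second resolvent identity.
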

\begin{proof}
By \cite[Thm. 10.2]{Te}, $U_{\aa,\eps}$
being bounded and compactly supported is relatively compact with respect to the free Laplacian. Since $\sfH_{\aa,0}$ is a bounded perturbation of the free Laplacian, we conclude that $U_{\aa,\eps}$ is relatively compact with respect to $\sfH_{\aa,0}$ as well. Hence it follows by the stability of the essential spectrum under relatively compact perturbations that	
\[
\sess(\sfH_{\aa,\eps}) = \sess(\sfH_{\aa,0}) =
\s(\sfH_{\aa,0}) = [\mu_1,\infty).\qedhere
\]
\end{proof}

\subsection{Birman-Schwinger principle}
Let us turn to the discrete spectrum of $\sfH_{\aa,\eps}$. As usual, the spectral properties are encoded in its resolvent which we denote as
$$
\sfR_{\aa,\eps} (\kappa) := (\sfH_{\aa,\eps}+\kappa^2)^{-1}
$$
for $-\kappa^2\in \rho(\sfH_{\aa,\eps})$. An efficient tool to study the weak coupling behavior of the discrete spectrum is the Birman-Schwinger principle, that is, the following equivalence (see \eg~\cite[Lem. 1]{B95},
\cite[Prop. 7.9.2\,(b)]{S4})
\begin{equation}\label{eq-BS}
  -\kappa^2 \in \sigma_{\mathrm{d}} (\sfH_{\aa,\eps})\quad
  \Longleftrightarrow \quad \ker \big(\sfI-
 \sign(U_{\aa,\eps}) |U_{\aa,\eps}|^{1/2}\sfR_{\aa,0}(\kappa )|U_{\aa,\eps }|^{1/2}\big) \ne\varnothing,
\end{equation}
for all $\kp > \sqrt{-\mu_1}$;
moreover, the multiplicities of the eigenvalues on the both sides are the same. It is easy to see that
$\sign(U_{\aa,\eps}) |U_{\aa,\eps}|^{1/2}\sfR_{\aa,0}(\kappa )|U_{\aa,\eps }|^{1/2}$ is a compact operator in $L^2(\dR^2)$.
Moreover, this operator is non-negative provided that the profile function $f$ is non-negative.

\subsection{Decomposition of the free resolvent}

The key point for the further discussion is a particular decomposition of $\sfR_{\aa,0}(\kappa )$ which we will construct in this subsection. The operator $\sfH_{\aa,0}$ admits the following decomposition,
\[
\sfH_{\aa,0}= \ov{\sfh_{0}\otimes \sfI + \sfI\otimes \sfh_\aa},
\]
where $\sfh_0$ and $\sfh_\aa$ are defined as in~\eqref{eq-onedinHamil}.
Let us introduce the orthogonal projection $\sfp_0\psi := (\psi,v_1)_{L^2(\dR)}v_1$ in the Hilbert space $L^2(\dR)$ corresponding to the lowest eigenvalue of $\sfh_\aa$. We can naturally represent $\sfh_\aa$ as $\mu_1\sfp_0\oplus \sfh_\aa(\sfI-\sfp_0)$ with respect to $L^2(\dR) = \sfp_0(L^2(\dR))\oplus (\sfI-\sfp_0)(L^2(\dR))$. In its turn,
$\sfp_0$ induces the orthogonal projection $\sfP_0 := \sfI\otimes\sfp_0$ acting in the Hilbert space $L^2(\dR^2) = L^2(\dR)\otimes L^2(\dR)$ as
\[
	(\sfP_0 u)(x_1,x_2) = v_1(x_2)\int_\dR u(x_1,x_2')v_1(x_2')\,\dd x_2'
\]
and the Hilbert space $L^2(\dR^2)$ can be accordingly decomposed into the orthogonal sum
\[
L^2(\dR^2) = \sfP_0(L^2(\dR^2))\oplus (\sfI-\sfP_0)(L^2(\dR^2)).
\]
We get $\sfH_{\aa,0} = \sfH_{\aa,0}^I\oplus\sfH_{\aa,0}^{II}$ with respect to the above decomposition of $L^2(\dR^2)$ where
\[
\begin{aligned}
	\sfH_{\aa,0}^I = \sfh_0\otimes\sfp_0 + \sfI\otimes \mu_1\sfp_0\quad\text{and}\quad
	\sfH_{\aa,0}^{II} = \ov{\sfh_0\otimes(\sfI-\sfp_0) + \sfI\otimes (\sfh_\aa(\sfI-\sfp_0))}.
\end{aligned}
\]
We decompose the resolvent $\sfR_{\aa,0}(\kp)$ into the sum of two self-adjoint operators
\begin{equation}\label{eq:decomp_R}
	\sfR_{\aa,0}(\kp) = 	\sfR_{\aa,0}^I(\kp) +	\sfR_{\aa,0}^{II}(\kp),\quad\text{where} \quad
	\sfR_{\aa,0}^I(\kp)\! :=\! \sfR_{\aa,0}(\kp)\sfP_0,\quad
	\sfR_{\aa,0}^{II}(\kp)\! :=\! \sfR_{\aa,0}(\kp)(\sfI - \sfP_0).
\end{equation}
Using the expression in~\cite[eq. (7.47)]{Te} for the integral kernel of the resolvent of $\sfh_0$ we derive the representation of $\sfR^I_{\aa,0}(\kp)$ as an integral operator
\begin{equation}\label{eq:RI}
\begin{aligned}
	(\sfR^I_{\aa,0}(\kp)u)(x_1,x_2)& = \left(\left((\sfh_0+\kp^2+\mu_1)^{-1}\otimes \sfI\right)\sfP_0u\right)(x_1,x_2)\\
	&=
	\frac{1}{2\sqrt{\mu_1+\kp^2}}\int_\dR\int_\dR  e^{-\sqrt{\mu_1+\kp^2}|x_1-x_1'|}v_1(x_2)v_1(x_2')u(x_1',x_2')\,\dd x_1'\,\dd x_2'.
\end{aligned}
\end{equation}
Let us introduce the following new parameter $\delta := \sqrt{\kappa^2 +\mu_1} > 0$ for $\kp > \sqrt{-\mu_1}$. For the sake of convenience we also introduce the following shorthand notation,
\begin{equation}\label{eq:whR}
\begin{aligned}
	&\qquad\qquad\wh{\sfR}_{\aa,0}(\delta)  := \sfR_{\aa,0}(\sqrt{-\mu_1+\dl^2}),\\
	\wh{\sfR}_{\aa,0}^I(\delta) &:= \sfR_{\aa,0}^I(\sqrt{-\mu_1+\dl^2}),\qquad
	\wh{\sfR}_{\aa,0}^{II}(\delta) := \sfR_{\aa,0}^{II}(\sqrt{-\mu_1+\dl^2}).
\end{aligned}
\end{equation}

\subsection{Reformulation of the Birman-Schwinger principle}\label{ssec:ref_BS}

Our aim is to analyze the integral operator in the Birman-Schwinger principle~\eqref{eq-BS}. We begin by inspecting the term $\sign(U_{\aa,\eps})|U_{\aa,\eps} |^{1/2} \wh{\sfR}_{\aa,0}^{I}(\delta) |U_{\aa,\eps}|^{1/2}$ which is a bounded integral operator in $L^2(\dR^2)$ with the kernel
\[
	\cG_{\eps,\dl} (x,x')=
	\frac{1}{2\delta }\,\sign(U_{\aa,\eps}(x))
	|U_{\aa,\eps}(x) |^{1/2} \, e^{-\delta |x_1-x_1'|}\, v_1(x_2)v_1(x_2')\,|U_{\aa,\eps}(x')| ^{1/2}\,.
\]
where $x = (x_1,x_2)$ and $x'=(x_1',x_2')$.
We decompose it into a singular and regular part,
\begin{equation}\label{eq-I}
\sign(U_{\aa,\eps})|U_{\aa,\eps} |^{1/2} \wh\sfR_{\aa, 0 }^{I}(\dl) |U_{\aa,\eps}|^{1/2} = \sfL_{\eps,\dl} +\sfM_{\eps,\dl},
\end{equation}
where the integral kernels of $\sfL_{\eps,\delta} $  and $\sfM_{\eps,\delta}$ are of the form
\begin{equation}\label{eq-Lkernel}
\cL_{\eps,\delta} (x,x')=\frac{1}{2\delta}\,
\sign(U_{\aa,\eps}(x))|U_{\aa,\eps}(x) |^{1/2} v_1(x_2)v_1(x_2')\,|U_{\aa,\eps}(x')|^{1/2}
\end{equation}
and
\begin{equation}\label{eq-Mdel}
\cM_{\eps,\delta}(x,x')= \,\sign(U_{\aa,\eps}(x))|U_{\aa,\eps}(x)|^{1/2}\,
	m_{\dl}(x_1,x_1')
\, v_1(x_2)v_1(x_2')\,|U_{\aa,\eps}(x')| ^{1/2}\,,
\end{equation}
with
 \begin{equation} \label{eq:def_kernelM}
	m_{\dl}(x_1,x_1') := \frac{e^{-\dl|x_1-x_1'|}-1}{2
		\dl}\,.
\end{equation}
In view of \eqref{eq:decomp_R} and \eqref{eq-I} we are  able to decompose the total Birman-Schwinger operator as

\begin{equation}\label{eqdecomRLN}
\sign(U_{\aa,\eps})|U_{\aa,\eps} |^{1/2} \wh\sfR_{\aa,0}(\delta  )|U_{\aa,\eps}|^{1/2} =   \sfL_{\eps,\delta} +\sfN_{\eps,\delta},
\end{equation}
where
\begin{equation*}\label{eq-deN}
	\sfN_{\eps,\delta}
	:=
	\sfM_{\eps,\delta} +  \sign(U_{\aa,\eps})|U_{\aa,\eps} |^{1/2} \wh\sfR^{II}_{\aa, 0}(\delta )|U_{\aa,\eps}| ^{1/2}.
\end{equation*}
Note that it follows from~\eqref{eq-Lkernel} that $\sfL_{\eps,\dl}$ is a rank-one operator. 

We obtain in the lemma below norm estimates
for the non-negative bounded operator $\wh\sfR^{II}_{\aa,0}(\dl)$ and its square root. In the formulation of this lemma
we employ the continuous trace map $\G_{x_2}\colon H^1(\dR^2)\arr L^2(\dR)$
for $x_2\in\dR$ defined by $\G_{x_2} u := u|_{\dR\times\{x_2\}}$, 
where $u|_{\dR\times\{x_2\}} $ is the trace of $u$ on the straight line $\dR\times\{x_2\}$; (see~\cite[Chap. 3]{McL}). 
\begin{lemma}\label{lem:R_norm_est}
	For all $\dl > 0$ the following hold.
	\begin{myenum}
		\item There exists a constant $A_\aa > 0$ (independent of $\dl$) such that
		\begin{equation*}\label{key}
			\|\wh\sfR^{II}_{\aa,0}(\dl)\|_{L^2\arr L^\infty} \le A_\aa,
		\end{equation*}
		where $\|\cdot\|_{L^2\arr L^\infty}$ denotes the norm of an operator as a mapping from $L^2(\dR^2)$ into $L^\infty(\dR^2)$.
		\item 
		There exists a constant $B_\aa > 0$ (independent of $\dl$ and $x_2$) such that 	\begin{equation*}\label{key}
			\big\|\G_{x_2}
			\big(\wh\sfR^{II}_{\aa,0}(\dl))^{1/2}\big\|_{L^2(\dR^2)\arr L^2(\dR)}
			\le B_\aa,\qquad \forall\,x_2\in\dR.
		\end{equation*}
	\end{myenum}
\end{lemma}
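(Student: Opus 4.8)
The plan is to reduce both bounds to \emph{uniform} Sobolev estimates for $\wh\sfR^{II}_{\aa,0}(\dl)$ and its square root, the uniformity in $\dl$ being supplied by the spectral gap between $\mu_1$ and the bottom of $\sfH^{II}_{\aa,0}$. First I would record the basic observation that, on the range of $\sfI-\sfP_0$, one has $\wh\sfR^{II}_{\aa,0}(\dl)=(\sfH^{II}_{\aa,0}+\dl^2-\mu_1)^{-1}$, and that $\sfH^{II}_{\aa,0}=\sfh_0\otimes(\sfI-\sfp_0)+\sfI\otimes \sfh_\aa(\sfI-\sfp_0)\ge \mu_2$, since $\sfh_0\ge 0$ and the restriction of $\sfh_\aa$ to $(\sfI-\sfp_0)L^2(\dR)$ is bounded below by $\mu_2>\mu_1$ (the convention $\mu_{N+1}=0$ covering the case $N=1$). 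Consequently $\sfH^{II}_{\aa,0}+\dl^2-\mu_1\ge \dl^2+(\mu_2-\mu_1)>0$, so that
\[
\|\wh\sfR^{II}_{\aa,0}(\dl)\|_{L^2\arr L^2}\le \frac{1}{\mu_2-\mu_1}\qquad\text{for all }\dl>0,
\]
which is the seed estimate for everything that follows.

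For claim (i) I would prove a uniform $L^2\arr H^2$ bound and then invoke the Sobolev embedding $H^2(\dR^2)\hookrightarrow L^\infty(\dR^2)$. Writing $v:=\wh\sfR^{II}_{\aa,0}(\dl)u\in \dom\sfH_{\aa,0}=H^2(\dR^2)$, the resolvent identity $(\sfH_{\aa,0}+\dl^2-\mu_1)v=(\sfI-\sfP_0)u$ gives $-\Delta v=(\sfI-\sfP_0)u+\aa\chi_{\Omg_0}v-(\dl^2-\mu_1)v$, whence
\[
\|\Delta v\|_{L^2}\le \|u\|_{L^2}+\aa\|v\|_{L^2}+(\dl^2-\mu_1)\|v\|_{L^2}.
\]
Using the sharper seed estimate $\|v\|_{L^2}\le(\dl^2+\mu_2-\mu_1)^{-1}\|u\|_{L^2}$, the last term is bounded by $\frac{\dl^2-\mu_1}{\dl^2+\mu_2-\mu_1}\|u\|_{L^2}$, and since $t\mapsto\frac{t-\mu_1}{t+\mu_2-\mu_1}$ is non-increasing for $t\ge 0$ (its derivative has the sign of $\mu_2\le 0$) this coefficient is $\le \frac{-\mu_1}{\mu_2-\mu_1}$, uniformly in $\dl$. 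Hence $\|\Delta v\|_{L^2}\le C_\aa\|u\|_{L^2}$; together with the $L^2$ bound and the norm equivalence $\|v\|_{H^2}\simeq\|v\|_{L^2}+\|\Delta v\|_{L^2}$ on $H^2(\dR^2)$ this yields $\|v\|_{H^2}\le C_\aa'\|u\|_{L^2}$ and therefore $\|v\|_{L^\infty}\le A_\aa\|u\|_{L^2}$, all uniformly in $\dl$.

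For claim (ii) I would first show that $T:=(\wh\sfR^{II}_{\aa,0}(\dl))^{1/2}$ maps $L^2(\dR^2)$ into $H^1(\dR^2)$ uniformly, and then apply a trace inequality on horizontal lines that is uniform in $x_2$. Since $Tu$ lies in the form domain $H^1(\dR^2)$, the identity
\[
\|\nb Tu\|_{L^2}^2=\frh_{\aa,0}[Tu]+\aa\int_{\Omg_0}|Tu|^2\dd x
\]
holds. Because $Tu$ is in the range of $\sfI-\sfP_0$ and $T$ commutes with $\sfH^{II}_{\aa,0}$, functional calculus gives $\frh_{\aa,0}[Tu]=(\phi(\sfH^{II}_{\aa,0})(\sfI-\sfP_0)u,(\sfI-\sfP_0)u)_{L^2(\dR^2)}$ with the spectral multiplier $\phi(\lm)=\frac{\lm}{\lm+\dl^2-\mu_1}\le 1$ for $\lm\ge\mu_2$, so that $\frh_{\aa,0}[Tu]\le\|u\|_{L^2}^2$. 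Combined with $\|Tu\|_{L^2}^2\le(\mu_2-\mu_1)^{-1}\|u\|_{L^2}^2$ this yields $\|Tu\|_{H^1}\le B_\aa\|u\|_{L^2}$ uniformly in $\dl$. Finally, the one-dimensional Sobolev inequality applied in the $x_2$ variable, $|w(x_1,x_2)|^2\le C\int_\dR(|w(x_1,t)|^2+|\partial_{x_2}w(x_1,t)|^2)\dd t$, integrated over $x_1$, shows $\|\G_{x_2}w\|_{L^2(\dR)}\le C\|w\|_{H^1(\dR^2)}$ uniformly in $x_2$; applying it to $w=Tu$ completes the proof.

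The main obstacle I expect is the uniformity in $\dl$ over the whole half-line $(0,\infty)$ rather than near a single value: the gap $\mu_2-\mu_1$ controls the dangerous regime $\dl\to 0^+$, but one must separately check that the spectral multipliers appearing in the estimates (such as $\frac{\dl^2-\mu_1}{\dl^2+\mu_2-\mu_1}$ in (i) and $\phi(\lm)$ in (ii)) stay bounded as $\dl\to\infty$ and across all $\lm\in\sigma(\sfH^{II}_{\aa,0})\subseteq[\mu_2,\infty)$, where $\mu_2$ may be negative. A secondary technical point is that (ii) concerns the square root $T$, so operator-norm bounds on $\wh\sfR^{II}_{\aa,0}(\dl)$ alone do not suffice; the $H^1$ control must be extracted from the quadratic form, which is precisely what makes the commutation of $T$ with $\sfH^{II}_{\aa,0}$ and the pointwise bound $\phi\le 1$ essential.
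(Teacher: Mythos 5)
Your proof is correct and follows essentially the same route as the paper's: for (i) you bound $\|\Delta \wh\sfR^{II}_{\aa,0}(\dl)u\|_{L^2}$ uniformly in $\dl$ via the resolvent equation and the spectral gap $\mu_2-\mu_1$, then invoke the embedding $H^2(\dR^2)\hookrightarrow L^\infty(\dR^2)$; for (ii) you obtain a uniform $L^2\to H^1$ bound for $(\wh\sfR^{II}_{\aa,0}(\dl))^{1/2}$ through the quadratic form $\frh_{\aa,0}$ and spectral calculus, and finish with a trace estimate on horizontal lines. The only differences are cosmetic (your explicit monotonicity check of the multiplier $\frac{t-\mu_1}{t+\mu_2-\mu_1}$ and the hands-on one-dimensional Sobolev trace bound in place of the paper's citation of McLean), so the arguments match in substance.
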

\begin{proof}
\noindent (i)	
Recall that
\begin{equation}\label{eq:H2_norm}
\|u\|_{H^2(\dR^2)}^2 := \|u\|^2_{L^2(\dR^2)} + \|\Delta u\|^2_{L^2(\dR^2)}
\end{equation}
defines the norm in the Sobolev space $H^2(\dR^2)$. By continuity of the embedding of $H^2(\dR^2)$ into $L^\infty(\dR^2)$  (see \eg~\cite[Cor. 9.13]{Br}) there exists a constant $c >0$ such that
\begin{equation}\label{eq:embedding}
\|u\|_{L^\infty(\dR^2)} \le c\|u\|_{H^2(\dR^2)}
\end{equation}
for any $u\in H^2(\dR^2)$.
We aim at estimating the norm of $\wh\sfR^{II}_{\aa,0}(\dl)$ as a mapping from $L^2(\dR^2)$ into $L^\infty(\dR^2)$. For an arbitrary $u\in L^2(\dR^2)$ we get
\begin{equation*}\label{eq:resolvent_H2}
\begin{aligned}
\|\wh\sfR_{\aa,0}^{II}(\dl)u\|_{H^2(\dR^2)}^2&= \|-\Delta\wh\sfR_{\aa,0}^{II}(\dl)u\|_{L^2(\dR^2)}^2 +
\|\wh\sfR_{\aa,0}^{II}(\dl)u\|_{L^2(\dR^2)}^2\\
&=
\|-\Delta\wh\sfR_{\aa,0}(\dl)(\sfI-\sfP_0)u\|_{L^2(\dR^2)}^2 +
\|\wh\sfR_{\aa,0}(\dl)(\sfI-\sfP_0)u\|_{L^2(\dR^2)}^2\\
&=
\|(\sfH_{\aa,0}\! +\!\kp^2
\!+\!\aa\chi_{\Omg_0}\!-\!\kp^2)
\wh\sfR_{\aa,0}(\dl)(\sfI-\sfP_0)u\|^2_{L^2(\dR^2)}\\
&\qquad\qquad + \|\wh\sfR_{\aa,0}(\dl)(\sfI-\sfP_0)u\|^2_{L^2(\dR^2)}\\
&\le
2\|(\sfI-\sfP_0)u\|^2_{L^2(\dR^2)}
+
(2(\aa+\kp^2)^2+1)\|\wh\sfR_{\aa,0}(\dl)(\sfI-\sfP_0)u\|^2_{L^2(\dR^2)}\\
&\le
\left(2+ \frac{2(\aa+\kp^2)^2+1}{(\mu_2-\mu_1+\dl^2)^2}\right)
\|u\|^2_{L^2(\dR^2)}\\
&\le \left(2+\frac{2(\aa-\mu_1)^2+1}{(\mu_2-\mu_1)^2}\right)\|u\|^2_{L^2(\dR^2)},
\end{aligned}
\end{equation*}
where we used the expression~\eqref{eq:H2_norm} in the first step, employed in the second step the definition of $\wh\sfR^{II}_{\aa,0}$ given in~\eqref{eq:decomp_R}-\eqref{eq:whR}, performed simple algebraic manipulations in the third step, applied the triangle inequality for the norm in the fourth step, employed the spectral theorem in the penultimate step, and finally, used the fact that the pre-factor is maximal for $\dl =0$ in the last step. Combining the last estimate with~\eqref{eq:embedding} we obtain that
\begin{equation*}\label{eq:resLinf}
\|\wh\sfR^{II}_{\aa,0}(\dl)\|_{L^2\arr L^\infty} \le A_\aa,\qquad\text{for}\quad
A_\aa := 
c  \left(2+\frac{2(\aa-\mu_1)^2+1}{(\mu_2-\mu_1)^2}\right)^{1/2}.
\end{equation*}

\noindent (ii) It follows from the definitions of the operator $\wh\sfR^{II}_{\aa,0}(\dl)$ and the projection $\sfP_0$ that
\begin{equation}\label{eq:Rsquareroot}
	(\wh\sfR^{II}_{\aa,0}(\dl))^{1/2} 
	= 
	(\sfH_{\aa,0} -\mu_1+\dl^2)^{-1/2}
	(\sfI - \sfP_0).
\end{equation}
In particular, we conclude that $\ran(\wh\sfR^{II}_{\aa,0}(\dl))^{1/2}\subset H^1(\dR^2)$.
Recall that
\begin{equation*}\label{eq:H1_norm}
\|u\|_{H^1(\dR^2)}^2 := \|u\|^2_{L^2(\dR^2)} + \|\nabla u\|^2_{L^2(\dR^2;\dC^2)}
\end{equation*}
defines the norm in the Sobolev space $H^1(\dR^2)$. By continuity of the trace mapping $\G_{x_2}$ (see \eg~\cite[Lem. 3.35]{McL}) there exists a constant $c' >0$
such that
\begin{equation}\label{eq:trace}
\|\G_{x_2}u\|_{L^2(\dR)} \le c'\|u\|_{H^1(\dR^2)}
\end{equation}
for any $u\in H^1(\dR^2)$ and $x_2\in\dR$.
For any $u\in L^2(\dR^2)$ we find
\[
\begin{aligned}
	\big\|\big(\wh\sfR^{II}_{\aa,0}(\dl)\big)^{1/2}u\big\|_{H^1(\dR^2)}^2 & = 
	\big\|\nb \big(\wh\sfR^{II}_{\aa,0}(\dl)\big)^{1/2}u\big\|^2_{L^2(\dR^2;\dC^2)} + 
	\big\|\big(\wh\sfR^{II}_{\aa,0}(\dl)\big)^{1/2}u\big\|^2_{L^2(\dR^2)}\\
	&\le
	\frh_{\aa,0}\big[\big(\wh\sfR^{II}_{\aa,0}(\dl)\big)^{1/2}u\big]+ 
	(\aa+1)\big\|\big(\wh\sfR^{II}_{\aa,0}(\dl)\big)^{1/2}u\big\|^2_{L^2(\dR^2)},
\end{aligned}
\]
where the quadratic form $\frh_{\aa,0}$ is defined as in~\eqref{eq:frhaaeps}. Using the second representation theorem~\cite[Chap. VI, Thm. 2.23]{Kato} and the expression~\eqref{eq:Rsquareroot}
we obtain that
\[
\begin{aligned}
	\big\|\big(\wh\sfR^{II}_{\aa,0}(\dl)\big)^{1/2}u\big\|_{H^1(\dR^2)}^2&\le \Big\|
	(\sfH_{\aa,0} + \aa +1)^{1/2}(\sfH_{\aa,0} -\mu_1+\dl^2)^{-1/2}(\sfI - \sfP_0)u\Big\|^2_{L^2(\dR^2)}\\
	&\le 
	\left(\sup_{\lm\in[\mu_2,\infty)}\frac{\lm+\aa+1}{\lm-\mu_1+\dl^2}\right)\|u\|^2_{L^2(\dR^2)}\le \frac{\mu_2+\aa+1}{\mu_2-\mu_1}\|u\|^2_{L^2(\dR^2)},			
\end{aligned}
\]
where we applied the spectral theorem in between. It follows from the last estimate combined with~\eqref{eq:trace} that
\[
	\big\|\G_{x_2}\big(\wh\sfR^{II}_{\aa,0}(\dl)\big)^{1/2}\big\|_{L^2(\dR^2)\arr L^2(\dR)}\le B_\aa,\qquad \text{for}\quad B_\aa := c'\left(\frac{\mu_2+\aa+1}{\mu_2-\mu_1}\right)^{1/2}.\qedhere
\]
\end{proof}
In the next lemma we get an estimate on the norm of the operator $\sfN_{\eps,\dl}$ and analyze its dependence on $\eps$ and $\dl$.
\begin{lemma} \label{le-Nsmall}
For all $\eps,\dl >0$ the operator $\sfN_{\eps,\delta}\colon L^2(\dR^2)\to L^2 (\dR^2)$ is bounded and its norm satisfies $\|\sfN_{\eps,\delta}\| \leq C\|U_{\aa,\eps} \|_{L^1(\dR^2)}$ with a constant $C>0$ independent of $\delta$ and $\eps$. In particular,  $\|\sfN_{\eps,\delta}\| \to 0$ holds as $\eps\arr 0$ uniformly in $\dl$. Moreover, the operator-valued function $(0,\eps_0)\times(0,\infty)\ni(\eps,\dl)\mapsto\sfN_{\eps,\delta}$ is continuous in the operator norm.
\end{lemma}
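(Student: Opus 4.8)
The plan is to control the two constituents of $\sfN_{\eps,\dl}$ separately. By the triangle inequality,
\[
\|\sfN_{\eps,\dl}\| \le \|\sfM_{\eps,\dl}\| + \big\|\sign(U_{\aa,\eps})|U_{\aa,\eps}|^{1/2}\wh\sfR^{II}_{\aa,0}(\dl)|U_{\aa,\eps}|^{1/2}\big\|,
\]
and I would bound each summand by a constant multiple of $\|U_{\aa,\eps}\|_{L^1(\dR^2)}$, with the constant independent of $\eps$ and $\dl$; this simultaneously yields boundedness of $\sfN_{\eps,\dl}$. The geometric facts I would use repeatedly are that $|U_{\aa,\eps}| = \aa\chi_{\Omg_\eps\triangle\Omg_0}$ is supported in the thin set $\Omg_\eps\triangle\Omg_0$, whose $x_2$-projection sits in an interval of length at most $2\eps\|f\|_{L^\infty(\dR)}$ and whose planar measure equals $\eps\|f\|_{L^1(\dR)}$; in particular $\|U_{\aa,\eps}\|_{L^1(\dR^2)} = \aa\eps\|f\|_{L^1(\dR)}\arr0$ as $\eps\arr0$.

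For the term $\sfM_{\eps,\dl}$ I would apply the Schur test to its kernel $\cM_{\eps,\dl}$ in~\eqref{eq-Mdel}. The key point is that, by~\eqref{eq:def_kernelM}, $|m_\dl(x_1,x_1')| = \tfrac{1-e^{-\dl|x_1-x_1'|}}{2\dl}\le\tfrac12|x_1-x_1'|$ for every $\dl>0$, while on the support of the kernel both $x_1$ and $x_1'$ lie in the compact set $\mathrm{supp}\,f$, so $m_\dl$ is bounded there uniformly in $\dl$. Together with the boundedness of $v_1$, both Schur integrals are then dominated by a constant times $\aa|\Omg_\eps\triangle\Omg_0|$, giving $\|\sfM_{\eps,\dl}\|\le C_1\|U_{\aa,\eps}\|_{L^1(\dR^2)}$ with $C_1$ independent of $\eps,\dl$.

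For the resolvent term I would use that $\wh\sfR^{II}_{\aa,0}(\dl)\ge0$ and $\|\sign(U_{\aa,\eps})\|\le1$ to reduce the estimate to
\[
\big\|\sign(U_{\aa,\eps})|U_{\aa,\eps}|^{1/2}\wh\sfR^{II}_{\aa,0}(\dl)|U_{\aa,\eps}|^{1/2}\big\| \le \big\||U_{\aa,\eps}|^{1/2}\big(\wh\sfR^{II}_{\aa,0}(\dl)\big)^{1/2}\big\|^2,
\]
by factoring $\wh\sfR^{II}_{\aa,0}(\dl)=(\wh\sfR^{II}_{\aa,0}(\dl))^{1/2}(\wh\sfR^{II}_{\aa,0}(\dl))^{1/2}$. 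Setting $w=(\wh\sfR^{II}_{\aa,0}(\dl))^{1/2}g$, I would estimate $\||U_{\aa,\eps}|^{1/2}w\|^2=\aa\int_{\Omg_\eps\triangle\Omg_0}|w|^2$ by dominating $\chi_{\Omg_\eps\triangle\Omg_0}$ by the characteristic function of its $x_2$-projection and invoking the trace bound $\|\G_{x_2}w\|_{L^2(\dR)}\le B_\aa\|g\|$ of Lemma~\ref{lem:R_norm_est}(ii). This produces the length of that projection, at most $2\eps\|f\|_{L^\infty(\dR)}$, which is comparable to $\|U_{\aa,\eps}\|_{L^1(\dR^2)}$ and yields the bound $C_2\|U_{\aa,\eps}\|_{L^1(\dR^2)}$ with $C_2$ independent of $\eps,\dl$. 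Combining the two steps proves the norm estimate, and the uniform-in-$\dl$ vanishing as $\eps\arr0$ is immediate.

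It remains to address continuity, which I expect to be the main obstacle. Continuity in $\dl$ is soft: $m_\dl$ depends continuously on $\dl$ and is uniformly bounded on the compact support, so $\sfM_{\eps,\dl}$ is continuous in $\dl$ in the Hilbert--Schmidt norm, while $\dl\mapsto\wh\sfR^{II}_{\aa,0}(\dl)$ is norm-continuous by the spectral theorem (its spectrum lies in $[\mu_2-\mu_1+\dl^2,\infty)$), and this survives sandwiching by the bounded factors $|U_{\aa,\eps}|^{1/2}$. The difficulty is continuity in $\eps$: the multiplications $|U_{\aa,\eps}|^{1/2}$ do \emph{not} converge in operator norm as $\eps'\arr\eps$, since their difference still has $L^\infty$-norm $\sqrt\aa$, so smallness must be supplied by the smoothing of $\wh\sfR^{II}_{\aa,0}(\dl)$ rather than by the factors. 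My remedy is the telescoping identity
\[
a_\eps \wh\sfR^{II}_{\aa,0}(\dl)\, b_\eps - a_{\eps'}\wh\sfR^{II}_{\aa,0}(\dl)\, b_{\eps'} = a_\eps\wh\sfR^{II}_{\aa,0}(\dl)(b_\eps-b_{\eps'}) + (a_\eps-a_{\eps'})\wh\sfR^{II}_{\aa,0}(\dl)\, b_{\eps'},
\]
with $a_\eps:=\sign(U_{\aa,\eps})|U_{\aa,\eps}|^{1/2}$ and $b_\eps:=|U_{\aa,\eps}|^{1/2}$; passing to the adjoint in the first summand, one arranges that the multiplication supported on the small set $\Omg_\eps\triangle\Omg_{\eps'}$ is applied after the resolvent. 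Applying the $L^2\arr L^\infty$ bound of Lemma~\ref{lem:R_norm_est}(i) to the resolvent and integrating the resulting $L^\infty$-function over $\Omg_\eps\triangle\Omg_{\eps'}$ extracts the factor $|\Omg_\eps\triangle\Omg_{\eps'}| = |\eps-\eps'|\,\|f\|_{L^1(\dR)}\arr0$ (here one uses that $\sign(U_{\aa,\eps})$ is independent of $\eps>0$ on $\Omg_\eps\triangle\Omg_0$, so $a_\eps-a_{\eps'}$ is again supported on $\Omg_\eps\triangle\Omg_{\eps'}$ with modulus $\le\sqrt\aa$). Since $A_\aa$ is independent of $\dl$, this gives continuity in $\eps$ uniformly in $\dl$; for $\sfM_{\eps,\dl}$ the same smallness appears directly through its Hilbert--Schmidt norm, controlled by $\|b_\eps-b_{\eps'}\|_{L^2}^2 = \aa|\Omg_\eps\triangle\Omg_{\eps'}|$. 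A final triangle-inequality argument combining the two variables then yields joint continuity.
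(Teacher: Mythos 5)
Your proposal is correct and follows essentially the same route as the paper: the same splitting of $\sfN_{\eps,\dl}$ into $\sfM_{\eps,\dl}$ plus the sandwiched $\wh\sfR^{II}_{\aa,0}(\dl)$, the same use of Lemma~\ref{lem:R_norm_est}\,(ii) via the $x_2$-projection of $\Omg_\eps\triangle\Omg_0$ for the uniform bound, and the same telescoping identity combined with the $L^2\arr L^\infty$ bound of Lemma~\ref{lem:R_norm_est}\,(i) and the smallness of $|\Omg_{\eps_1}\triangle\Omg_{\eps_2}|$ for continuity in $\eps$. The only deviations are cosmetic (Schur test instead of the Hilbert--Schmidt bound for $\sfM_{\eps,\dl}$, and $\dl$-continuity argued on the two constituents directly rather than via the full Birman--Schwinger operator minus $\sfL_{\eps,\dl}$).
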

\begin{proof}
\noindent\emph{Step 1.}	
In this step we estimate the norm of $\sfM_{\eps,\delta}$. To this aim, we note that using the inequality $1-e^{-x} \le x$ for $x\ge0$ we find for $(x_1,x_2),(x_1',x_2')\in\supp U_{\aa,\eps}$  the bound
\[
	|m_{\dl}(x_1,x_1')| \le  M\,,
\]
where
\begin{equation}\label{eq:def_Mprime}
M :=\frac12\sup_{x_1,x'_1\in\supp f}|x_1-x_1'|>0\,.
\end{equation}
Therefore, the  kernel of $\sfM_{\eps,\delta}$ given by \eqref{eq-Mdel} admits the pointwise estimate
\[
	|\cM_{\eps,\delta}(x,x')| \leq M\, |U_{\aa,\eps}(x) |^{1/2}\, v_1(x_2)v_1(x_2')\, |U_{\aa,\eps}(x') |^{1/2}.
\]
Since the support of $U_{\aa,\eps}$ is  bounded uniformly for all sufficiently small $\eps >0$, we obtain that
\begin{equation}\label{eq:est_norm1}
\begin{aligned}
\|\sfM_{\eps,\dl}\| &\le
\|\sfM_{\eps,\dl}\|_\mathrm{HS}
\le M\left( \int_{\dR^2}\int_{\dR^2}
|U_{\aa,\eps}(x)|  v_1^2(x_2)v_1^2(x_2')|U_{\aa,\eps}(x')|\,\dd x \,\dd x'\right)^{1/2}\\
&\le M  \|U_{\aa,\eps}\|_{L^1(\dR^2)}\|v_1\|^2_{
	L^\infty(\dR)};
\end{aligned}
\end{equation} 
 here $\|\cdot\|_{\rm HS}$ stands for the Hilbert-Schmidt norm.

\smallskip

\noindent\emph{Step 2.}
Our next aim is to estimate the norm of $\sign(U_{\aa,\eps})|U_{\aa,\eps} |^{1/2} \wh\sfR^{II}_{\aa, 0}(\delta )|U_{\aa,\eps}| ^{1/2}$. 
Using Lemma~\ref{lem:R_norm_est}\,(ii) we get  
\begin{equation}\label{eq:est_norm2}
\begin{aligned}
	&\big\|\sign(U_{\aa,\eps})|U_{\aa,\eps}|^{1/2}\wh\sfR^{II}_{\aa,0}(\dl)|U_{\aa,\eps}|^{1/2}\big\| \le
	\big\||U_{\aa,\eps}|^{1/2}(\wh\sfR^{II}_{\aa,0}(\dl))^{1/2}\big\|^2\\
	&\qquad	 \le \aa\int_{d+\eps\min f}^{d+\eps\max f}\big\|\G_{x_2}(\wh\sfR^{II}_{\aa,0}(\dl))^{1/2})\big\|_{L^2(\dR^2)\arr L^2(\dR)}^2\dd x_2\\
	&\qquad\le \aa\eps(\max f-\min f)B_\aa^2
	= \frac{\max f-\min f}{\|f\|_{L^1(\dR)}}B_\aa^2
		\|U_{\aa,\eps}\|_{L^1(\dR^2)}.
\end{aligned}
\end{equation}
The upper bound on the norm of $\sfN_{\eps,\dl}$ follows from~\eqref{eq:est_norm1} and~\eqref{eq:est_norm2} combined with the triangle inequality.

\smallskip

\noindent{\it Step 3.} In this step we will show that $\sfN_{\eps,\dl}$ is continuous in the operator norm as a function of $\eps$ and $\dl$. Continuity with respect to $\dl$ follows from the representation
\[
	\sfN_{\eps,\dl} = \sign(U_{\aa,\eps})|U_{\aa,\eps}|^{1/2}\wh\sfR_{\aa,0}(\dl)|U_{\aa,\eps}|^{1/2}-\sfL_{\eps,\dl}
\]
combined with the continuity in $\dl$ of $\sfL_{\eps,\dl}$ and of the resolvent $\wh\sfR_{\aa,0}(\dl)$ in the operator norm.

Continuity with respect to $\eps$ is more subtle. Let us take $\eps_1, \eps_2\in(0,\eps_0)$; our aim is to show that $\|\sfN_{\eps_1,\dl}-\sfN_{\eps_2,\dl}\|\arr0$ holds as $\eps_1 \arr\eps_2$. We use the notation
\[
\begin{aligned}
V_{\aa,\eps_1,\eps_2}(x)& :=
\sign(U_{\aa,\eps_1}(x))
|U_{\aa,\eps_1}(x)|^{1/2}-	\sign(U_{\aa,\eps_2}(x))
|U_{\aa,\eps_2}(x)|^{1/2},\\
W_{\aa,\eps_1,\eps_2}(x) & :=
|U_{\aa,\eps_1}(x)|^{1/2} -	
|U_{\aa,\eps_2}(x)|^{1/2};
\end{aligned}
\]
by means of the triangle inequality for the operator norm we then get
\begin{equation}\label{eq:Neps1eps2}
\begin{aligned}
	\|\sfN_{\eps_1,\dl}-\sfN_{\eps_2,\dl}\|
	&\le \|\sfM_{\eps_1,\dl} - \sfM_{\eps_2,\dl}\| +
	\Big\|
	\sign(U_{\aa,\eps_2})|U_{\aa,\eps_2}|^{1/2}\wh\sfR^{II}_{\aa,0}(\dl)W_{\aa,\eps_1,\eps_2}\Big\|\\
	&\qquad+\Big\|V_{\aa,\eps_1,\eps_2}\wh\sfR^{II}_{\aa,0}(\dl)|U_{\aa,\eps_1}|^{1/2}\Big\|.
	\\
\end{aligned}
\end{equation}
Let us recall the notation $V_{\aa,\eps} =\sign(U_{\aa,\eps})|U_{\aa,\eps}|^{1/2}$. Estimating the norm of the difference $\sfM_{\eps_1,\dl}-\sfM_{\eps_2,\dl}$ from above by its Hilbert-Schmidt norm we get
\begin{equation}\label{eq:Meps1eps2}
\begin{aligned}
	\|\sfM_{\eps_1,\dl}\! -\! \sfM_{\eps_2,\dl}\|^2&\!\le\!
		\|\sfM_{\eps_1,\dl} - \sfM_{\eps_2,\dl}\|^2_{\rm HS}\\
	&\!\le\! M^2\int_{\dR^2}\!\int_{\dR^2}\!
	\left|V_{\aa,\eps_1}(x)|V_{\aa,\eps_1}(x')|\!-\!V_{\aa,\eps_2}(x)|V_{\aa,\eps_2}(x')|\right|^2\!
	v_1^2(x_2)v_1^2(x_2')\,\dd x\,\dd x'\\
	&\le 4M^2\aa^2\|v_1\|_{L^\infty ( \dR )}^4\big
	|(\Omg_{\eps_1}\times\Omg_{\eps_1})\triangle(\Omg_{\eps_2}\times\Omg_{\eps_2})\big| \arr 0,\qquad \text{as}\,\,\eps_2\arr\eps_1,
\end{aligned}
\end{equation}
where $\cA\triangle \cB = (\cA\setminus \cB)\cup(\cB\setminus \cA)$ is the symmetric difference for open sets $\cA,\cB$ and $M$ is as in~\eqref{eq:def_Mprime}.
Using Lemma~\ref{lem:R_norm_est}\,(i) we get
\begin{equation}\label{eq:I}
\begin{aligned}
	\cI_{\eps_1,\eps_2} &:=\Big\|
	\sign(U_{\aa,\eps_2})|U_{\aa,\eps_2}|^{1/2}\wh\sfR^{II}_{\aa,0}(\dl)
	W_{\aa,\eps_1,\eps_2}\Big\|\\
	&=
	\Big\|
	W_{\aa,\eps_1,\eps_2}
	\wh\sfR^{II}_{\aa,0}(\dl)\sign(U_{\aa,\eps_2})|U_{\aa,\eps_2}|^{1/2}\Big\|\\
	&\le \|W_{\aa,\eps_1,\eps_2}\|_{L^2(\dR^2)}\cdot\|\wh\sfR^{II}_{\aa,0}(\dl)\|_{L^2\arr L^\infty  }
\cdot \| \left|U_{\aa,\eps_2}\right| ^{1/2}\|_{L^\infty (\dR^2)}\\
	&\le
	\aa |\Omg_{\eps_1}\triangle\Omg_{\eps_2}|^{1/2}  A_\aa\arr 0 \qquad\text{as}\;\eps_2\arr\eps_1.
\end{aligned}
\end{equation}
In a similar way, we obtain the following estimate
\begin{equation}\label{eq:J}
\begin{aligned}
	\cJ_{\eps_1,\eps_2} &:=\Big\|
	V_{\aa,\eps_1,\eps_2}\wh\sfR^{II}_{\aa,0}(\dl)|U_{\aa,\eps_1}|^{1/2}\Big\|\\
	&\le \|V_{\aa,\eps_1,\eps_2}\|_{L^2(\dR^2)}\cdot\|\wh\sfR^{II}_{\aa,0}(\dl)\|_{L^2\arr L^\infty}\cdot\||U_{\aa,\eps_1}|^{1/2}\|_{L^\infty(\dR^2)}\\
	&\le
	2\aa|\Omg_{\eps_1}\triangle\Omg_{\eps_2}|^{1/2} A_\aa \arr
	0\qquad\text{as}\;\eps_2\arr\eps_1.
\end{aligned}
\end{equation}
Combining~\eqref{eq:Neps1eps2} with~\eqref{eq:Meps1eps2},~\eqref{eq:I},~\eqref{eq:J} we conclude that $\sfN_{\eps,\dl}$ is continuous in the operator norm with respect to $\eps$.
\end{proof}
In the next proposition we show that if for all sufficiently small $\eps > 0$ the discrete spectrum of $\sfH_{\aa,\eps}$ is non-empty for a certain profile function $f$, then this discrete spectrum necessarily consists of a unique simple eigenvalue.
\begin{proposition}\label{prop:bs_bnd}
	The number of the eigenvalues $N_{\aa}(\eps)$, with multiplicities taken into account, of the operator $\sfH_{\aa,\eps}$ lying in the interval $(-\infty,\mu_1)$ satisfies the bound $N_\aa(\eps) \le 1$ for all sufficiently small $\eps > 0$.
\end{proposition}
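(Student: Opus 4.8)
The plan is to feed the decomposition~\eqref{eqdecomRLN} into the Birman--Schwinger principle~\eqref{eq-BS}, exploit the rank-one structure of $\sfL_{\eps,\dl}$ to reduce the kernel condition to a scalar equation, and finally show by a monotonicity argument that this scalar equation has at most one root. Fix $\kp > \sqrt{-\mu_1}$ and set $\dl := \sqrt{\kp^2+\mu_1} > 0$. By~\eqref{eq-BS} together with~\eqref{eqdecomRLN}, the number $-\kp^2 = \mu_1 - \dl^2$ is a discrete eigenvalue of $\sfH_{\aa,\eps}$, with the same multiplicity, precisely when $\ker(\sfI - \sfL_{\eps,\dl} - \sfN_{\eps,\dl}) \ne \{0\}$. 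By Lemma~\ref{le-Nsmall} we have $\|\sfN_{\eps,\dl}\| \le C\|U_{\aa,\eps}\|_{L^1(\dR^2)}$ uniformly in $\dl$, so I would fix $\eps_0 > 0$ so small that $\|\sfN_{\eps,\dl}\| \le \tfrac12$ for all $\eps\in(0,\eps_0]$ and all $\dl > 0$; then $\sfI - \sfN_{\eps,\dl}$ is boundedly invertible with $\|(\sfI-\sfN_{\eps,\dl})^{-1}\|\le 2$.

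Next, recall from~\eqref{eq-Lkernel} that $\sfL_{\eps,\dl}$ is the rank-one operator $\sfL_{\eps,\dl}u = \tfrac{1}{2\dl}(u,h_\eps)_{L^2(\dR^2)}\,g_\eps$ with the real-valued functions $g_\eps(x) := V_{\aa,\eps}(x)v_1(x_2)$ and $h_\eps(x):=|U_{\aa,\eps}(x)|^{1/2}v_1(x_2)$. The equation $(\sfI - \sfN_{\eps,\dl} - \sfL_{\eps,\dl})\phi = 0$ is then equivalent to $\phi = \tfrac{1}{2\dl}(\phi,h_\eps)(\sfI-\sfN_{\eps,\dl})^{-1}g_\eps$. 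Any nontrivial $\phi$ must satisfy $(\phi,h_\eps)\ne 0$ and is then a scalar multiple of $(\sfI-\sfN_{\eps,\dl})^{-1}g_\eps$, so the kernel is at most one-dimensional; taking the inner product with $h_\eps$ shows it is nontrivial if and only if $\dl$ solves the scalar equation
\[
2\dl = F_\eps(\dl) := \big((\sfI-\sfN_{\eps,\dl})^{-1}g_\eps,\, h_\eps\big)_{L^2(\dR^2)}.
\]
Since the integral kernels of $\sfM_{\eps,\dl}$ and of $\wh\sfR^{II}_{\aa,0}(\dl)$ are real-valued and $g_\eps,h_\eps$ are real, $F_\eps$ is real-valued. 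Consequently every discrete eigenvalue of $\sfH_{\aa,\eps}$ below $\mu_1$ is simple, and $N_\aa(\eps)$ equals the number of roots $\dl > 0$ of the displayed equation.

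It remains to show that $2\dl = F_\eps(\dl)$ has at most one root in $(0,\infty)$. Using $\|g_\eps\|_{L^2(\dR^2)} = \|h_\eps\|_{L^2(\dR^2)} = \big(\int_{\dR^2}|U_{\aa,\eps}|v_1^2\,\dd x\big)^{1/2} = \OO(\sqrt\eps)$ together with the identity $(\sfI-\sfN_{\eps,\dl_1})^{-1} - (\sfI-\sfN_{\eps,\dl_2})^{-1} = (\sfI-\sfN_{\eps,\dl_1})^{-1}(\sfN_{\eps,\dl_1} - \sfN_{\eps,\dl_2})(\sfI-\sfN_{\eps,\dl_2})^{-1}$, one obtains
\[
|F_\eps(\dl_1) - F_\eps(\dl_2)| \le \|(\sfI-\sfN_{\eps,\dl_1})^{-1}\|\,\|(\sfI-\sfN_{\eps,\dl_2})^{-1}\|\,\|\sfN_{\eps,\dl_1}-\sfN_{\eps,\dl_2}\|\,\|g_\eps\|_{L^2}\,\|h_\eps\|_{L^2}.
\]
I would then prove a Lipschitz bound $\|\sfN_{\eps,\dl_1}-\sfN_{\eps,\dl_2}\|\le L\,|\dl_1-\dl_2|$ with $L$ independent of $\eps$ and of $\dl$; combined with the two estimates above this makes $F_\eps$ Lipschitz in $\dl$ with constant $\OO(\eps)$. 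Hence for $\eps$ sufficiently small this constant is strictly below $2$, so $\dl\mapsto 2\dl - F_\eps(\dl)$ is strictly increasing on $(0,\infty)$ and therefore vanishes at most once. This yields $N_\aa(\eps)\le 1$ and completes the proof.

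The main obstacle is the uniform-in-$\dl$ Lipschitz control of $\sfN_{\eps,\dl}$ used in the last step. For the singular part $\sfM_{\eps,\dl}$ this reduces to bounding the $\dl$-increment of $m_\dl(x_1,x_1') = \frac{e^{-\dl|x_1-x_1'|}-1}{2\dl}$ from~\eqref{eq:def_kernelM} uniformly for $x_1,x_1'\in\supp f$, where the delicate point is that the bound must survive the limit $\dl\to 0$ (the apparent singularity in $m_\dl$ is removable, and its $\dl$-derivative stays bounded on the bounded support of $f$). For the remaining part $\sign(U_{\aa,\eps})|U_{\aa,\eps}|^{1/2}\wh\sfR^{II}_{\aa,0}(\dl)|U_{\aa,\eps}|^{1/2}$ one differentiates the resolvent and uses that it acts on $\ran(\sfI-\sfP_0)$, where $\sfH_{\aa,0}\ge\mu_2 > \mu_1$, so that the resulting factor $\sup_{\lm\ge\mu_2}\frac{\dl}{(\lm-\mu_1+\dl^2)^2}$ remains bounded for all $\dl\in(0,\infty)$, the smallness in $\eps$ being supplied as before by the two multiplicative factors $|U_{\aa,\eps}|^{1/2}$ supported on a set of measure $\OO(\eps)$.
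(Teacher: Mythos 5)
Your proof is correct, but it reaches the bound $N_\aa(\eps)\le 1$ by a genuinely different mechanism than the paper. The paper first reduces, via the min--max principle, to a non-negative profile $f$ (replacing $f$ by $\max\{f,0\}$ can only increase $N_\aa(\eps)$); this step is needed because it then invokes the eigenvalue-\emph{counting} version of the Birman--Schwinger principle (\cite[Thm.~7.9.4]{S4}, \cite{FS11}), which requires a sign-definite perturbation so that $\sfL_{\eps,\dl}+\sfN_{\eps,\dl}$ is self-adjoint, and it concludes by the Birman--Solomjak perturbation result \cite[\S 9.3, Thm.~3]{BS87}: a rank-one operator plus an operator of norm $<1$ has at most one eigenvalue in $(1,\infty)$. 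You instead stay with the kernel form~\eqref{eq-BS}, which is valid for sign-changing $U_{\aa,\eps}$, reduce to the scalar equation $2\dl=F_\eps(\dl)$ --- this part essentially reproves the paper's Lemma~\ref{lem:BS} --- and then obtain uniqueness of the root by showing $\dl\mapsto 2\dl-F_\eps(\dl)$ is strictly increasing, using a Lipschitz bound for $\dl\mapsto\sfN_{\eps,\dl}$ that is uniform in $\dl\in(0,\infty)$ and $\eps$. That Lipschitz lemma is \emph{not} in the paper (Lemma~\ref{le-Nsmall} only gives norm continuity), but your sketch of it is sound: the $\dl$-derivative of $m_\dl$ is bounded by $\tfrac14|x_1-x_1'|^2$ uniformly in $\dl>0$, and $\big\|\partial_\dl\wh\sfR^{II}_{\aa,0}(\dl)\big\|\le 2\dl(\mu_2-\mu_1+\dl^2)^{-2}$ is uniformly bounded thanks to the spectral gap $\mu_2-\mu_1>0$. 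What each approach buys: the paper's proof is shorter given the cited abstract results but needs the sign reduction; yours is more self-contained, handles sign-changing $f$ directly, and yields monotonicity of the spectral function as a by-product (the paper obtains uniqueness only by combining Proposition~\ref{prop:bs_bnd} with Lemma~\ref{lem:BS}), at the cost of one extra technical estimate. One caveat: your closing remark that the smallness in $\eps$ for the $\wh\sfR^{II}_{\aa,0}$ part of the Lipschitz constant is ``supplied by the two multiplicative factors $|U_{\aa,\eps}|^{1/2}$'' is imprecise --- sup-norm sandwiching gives only an $\eps$-independent bound $\OO(1)$, not smallness (smallness would require the trace-map argument of Lemma~\ref{lem:R_norm_est}\,(ii)) --- but this is harmless, since your argument only needs $L$ bounded; the factor $\OO(\eps)$ in the Lipschitz constant of $F_\eps$ comes from $\|g_\eps\|_{L^2(\dR^2)}\|h_\eps\|_{L^2(\dR^2)}=\OO(\eps)$, exactly as you use it.
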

\begin{proof}
	Without loss of generality we may assume that the profile function  $f$ is non-negative. Should $f$ be sign-changing we can replace $f$ by a non-negative profile function $g:=\max\{f,0\}$. Upon such a replacement, the essential spectrum of $\sfH_{\aa,\eps}$ remains the same, but the modified operator becomes smaller in the form sense, and hence in view of the min-max principle the value $N_{\aa}(\eps)$ can not  decrease.
	
	By~\cite[Thm. 7.9.4]{S4} (see also \cite[Eq. (1.14)]{FS11}) we obtain that the dimension of the spectral subspace of the operator $\sfH_{\aa,\eps}$ corresponding to the interval $(-\infty,\mu_1-\dl^2)$ with $\dl >0$ is equal to the dimension $n_{\aa,\dl}(\eps)$ of the spectral subspace of the self-adjoint operator $\sfL_{\eps,\dl}+\sfN_{\eps,\dl}$ corresponding to the interval $(1,\infty)$. For all sufficiently small $\eps >0$ we have by Lemma~\ref{le-Nsmall} that $\|\sfN_{\eps,\dl}\| < 1$ for any $\dl > 0$. Hence we can conclude from the facts that $\sfN_{\eps,\dl}$ is self-adjoint that $\sfL_{\eps,\dl}$ is a self-adjoint rank-one operator combined with the perturbation result~\cite[\S 9.3, Thm. 3]{BS87} that $n_{\aa,\dl}(\eps) \le 1$ for all sufficiently small $\eps >0$ and any $\dl > 0$. In this way, we obtain that $N_\aa(\eps) \le 1$.
\end{proof}

In the next lemma we reformulate the Birman-Schwinger principle \eqref{eq-BS} in a more convenient form for all sufficiently small $\eps > 0$. Moreover, we use this new formulation to derive a scalar equation that the lowest eigenvalue must satisfy.
\begin{lemma} \label{lem:BS}
For all sufficiently small $\eps > 0$
\begin{equation*}\label{eq-leq1}
\dim\ker(\sfH_{\aa,\eps} -\mu_1+\dl^2)
=
\dim\ker(\sfI-(\sfI-\sfN_{\eps,\delta})^{-1} \sfL_{\eps,\delta})\leq 1,
\end{equation*}
and,
\begin{equation}\label{eq-kerempty}
\ker(\sfI-(\sfI-\sfN_{\eps,\delta})^{-1} \sfL_{\eps,\delta}) \neq\{0\}
\end{equation}
holds if and only if
\begin{equation}\label{eq-kerfun}
F(\eps, \delta )=1
\end{equation}
with
$$
F(\eps, \delta ):= \frac{1}{2\delta } \int_{\dR^2}|U_{\aa,\eps}(x)|^{1/2}v_1 (x_2)
\big((\sfI-\sfN_{\eps,\delta}  )^{-1}
\sign(U_{\aa,\eps})  |U_{\aa,\eps} |^{1/2} v_1 \big) (x )\,\,\dd x;
$$
where we interpret the second entry of $v_1$ as the function $\dR^2\ni(x_1,x_2)\mapsto v_1(x_2)$. Moreover, the function $F$ is continuous in $\eps$ and $\dl$ for sufficiently small $\eps >0$ and for $\dl >0$.
\end{lemma}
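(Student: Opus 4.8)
The plan is to run the Birman-Schwinger principle \eqref{eq-BS} at the spectral parameter determined by $-\kp^2 = \mu_1 - \dl^2$, i.e.\ $\kp = \sqrt{-\mu_1 + \dl^2}$; for $\dl > 0$ this value lies strictly below $\mu_1 = \inf\sess(\sfH_{\aa,\eps})$, so $\mu_1 - \dl^2$ is a genuine candidate for a discrete eigenvalue. Since \eqref{eq-BS} preserves multiplicities and $\wh\sfR_{\aa,0}(\dl) = \sfR_{\aa,0}(\kp)$, the decomposition \eqref{eqdecomRLN} gives
\[
\dim\ker(\sfH_{\aa,\eps} - \mu_1 + \dl^2) = \dim\ker(\sfI - \sfL_{\eps,\dl} - \sfN_{\eps,\dl}).
\]
By Lemma~\ref{le-Nsmall} there is $\eps_1 > 0$ with $\|\sfN_{\eps,\dl}\| < 1$ for all $\eps \in (0,\eps_1)$ and all $\dl > 0$, so $\sfI - \sfN_{\eps,\dl}$ is boundedly invertible. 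Factoring $\sfI - \sfL_{\eps,\dl} - \sfN_{\eps,\dl} = (\sfI - \sfN_{\eps,\dl})\bigl(\sfI - (\sfI - \sfN_{\eps,\dl})^{-1}\sfL_{\eps,\dl}\bigr)$ and using invertibility of the left factor, the two kernels coincide, which yields the first displayed identity. The bound $\le 1$ is then immediate from Proposition~\ref{prop:bs_bnd}: the left-hand dimension counts an eigenvalue below $\mu_1$ and is therefore at most $N_\aa(\eps) \le 1$.

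Next I would exploit that $\sfL_{\eps,\dl}$ is rank one. From \eqref{eq-Lkernel} its action is $\sfL_{\eps,\dl}u = \tfrac{1}{2\dl}\,\phi_\eps\,(\chi_\eps, u)_{L^2(\dR^2)}$, where $\phi_\eps := \sign(U_{\aa,\eps})|U_{\aa,\eps}|^{1/2}v_1$ and $\chi_\eps := |U_{\aa,\eps}|^{1/2}v_1$, with $v_1$ read as $(x_1,x_2)\mapsto v_1(x_2)$. Hence $(\sfI - \sfN_{\eps,\dl})^{-1}\sfL_{\eps,\dl}$ is again rank one, acting as $u \mapsto g_\eps\,(\chi_\eps, u)_{L^2(\dR^2)}$ with $g_\eps := \tfrac{1}{2\dl}(\sfI - \sfN_{\eps,\dl})^{-1}\phi_\eps$. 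For such an operator the only possible nonzero eigenvalue is the scalar $(\chi_\eps, g_\eps)_{L^2(\dR^2)}$, attained on $g_\eps$; therefore $1$ belongs to its point spectrum, i.e.\ \eqref{eq-kerempty} holds, if and only if $(\chi_\eps, g_\eps)_{L^2(\dR^2)} = 1$. Writing this inner product out reproduces exactly the expression defining $F(\eps,\dl)$, so \eqref{eq-kerempty} is equivalent to \eqref{eq-kerfun}. In the degenerate case $\phi_\eps = 0$ one has $g_\eps = 0$, so both conditions fail and the equivalence holds trivially.

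For the continuity I would write $F(\eps,\dl) = \tfrac{1}{2\dl}\,(\chi_\eps, (\sfI - \sfN_{\eps,\dl})^{-1}\phi_\eps)_{L^2(\dR^2)}$ and check that it is a product of jointly continuous factors. The prefactor $\tfrac{1}{2\dl}$ is continuous for $\dl > 0$; the map $(\eps,\dl)\mapsto(\sfI - \sfN_{\eps,\dl})^{-1}$ is continuous in operator norm because $\sfN_{\eps,\dl}$ is norm-continuous by Lemma~\ref{le-Nsmall} and inversion is continuous on the open set where $\sfI - \sfN_{\eps,\dl}$ is invertible (for small $\eps$ one may simply use the Neumann series). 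Finally $\eps\mapsto\phi_\eps$ and $\eps\mapsto\chi_\eps$ are continuous in $L^2(\dR^2)$: from $\bigl||a|^{1/2}-|b|^{1/2}\bigr|^2 \le |a-b|$ for $a,b\ge0$, together with $\|U_{\aa,\eps_1}-U_{\aa,\eps_2}\|_{L^1(\dR^2)} \le \aa\,|\Omg_{\eps_1}\triangle\Omg_{\eps_2}| \arr 0$ as $\eps_1\arr\eps_2$ (the estimate already used in \eqref{eq:I}--\eqref{eq:J}) and the boundedness of $v_1$, one gets $\|\chi_{\eps_1}-\chi_{\eps_2}\|_{L^2(\dR^2)}\arr0$ and likewise for $\phi_\eps$. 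Joint continuity of $F$ then follows since the inner product is continuous in both arguments.

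I expect the conceptual core to be the rank-one eigenvalue identification, namely recognizing that the unique nonzero eigenvalue of $(\sfI - \sfN_{\eps,\dl})^{-1}\sfL_{\eps,\dl}$ is precisely $F(\eps,\dl)$ and that this value equaling $1$ is exactly the condition for a nontrivial kernel. The remaining ingredients — invertibility of $\sfI - \sfN_{\eps,\dl}$, the factorization, and the $L^2$-continuity of the multiplier vectors — are routine given Lemma~\ref{le-Nsmall} and the symmetric-difference estimates already in hand; the only place needing a little care is the bookkeeping for the joint continuity of $F$.
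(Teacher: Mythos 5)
Your proposal is correct and follows essentially the same route as the paper's proof: the Birman--Schwinger principle combined with the decomposition \eqref{eqdecomRLN}, factoring out the invertible $\sfI-\sfN_{\eps,\delta}$ via Lemma~\ref{le-Nsmall}, identifying the unique nonzero eigenvalue of the rank-one operator $(\sfI-\sfN_{\eps,\delta})^{-1}\sfL_{\eps,\delta}$ with the scalar $F(\eps,\delta)$, and obtaining continuity of $F$ from the norm-continuity of $(\sfI-\sfN_{\eps,\delta})^{-1}$ together with the $L^2$-continuity of the two multiplier vectors. The only cosmetic differences are that you invoke Proposition~\ref{prop:bs_bnd} for the bound $\le 1$ where the paper reads it off directly from the rank-one structure (an argument your proposal also contains), and that you spell out the $L^2(\dR^2)$-continuity of $|U_{\aa,\eps}|^{1/2}v_1$ and $\sign(U_{\aa,\eps})|U_{\aa,\eps}|^{1/2}v_1$ in $\eps$, which the paper declares obvious.
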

\begin{proof}
It follows from the decomposition \eqref{eqdecomRLN} in combination with the Birman-Schwinger principle \eqref{eq-BS} that
\[
\dim\ker(\sfH_{\aa,\eps}-\mu_1+\dl^2) = \dim\ker(\sfI - \sfL_{\eps,\delta} -\sfN_{\eps,\delta}  ).
\]
By Lemma~\ref{le-Nsmall}
the operator $\sfI-\sfN_{\eps,\delta} $ is invertible for all $\eps>0$ small enough, so that
\[
	\ker \big(\sfI - \sfN_{\eps,\delta} -\sfL_{\eps,\delta}\big)=
	\ker\big(\sfI - (\sfI - \sfN_{\eps,\delta})^{-1} \sfL_{\eps,\delta}\big)
\]
and we have restated the problem of identifying discrete eigenvalues of $\sfH_{\aa,\eps}$ to the analysis of $\ker(\sfI - (\sfI - \sfN_{\eps,\delta})^{-1} \sfL_{\eps,\delta})$.

Note that  $\sfL_{\eps,\delta} $ is by \eqref{eq-Lkernel} a rank-one operator, and therefore the same holds for $(\sfI-\sfN_{\eps,\delta}  )^{-1} \sfL_{\eps,\delta}$. Consequently, the operator $(\sfI-\sfN_{\eps,\delta})^{-1} \sfL_{\eps,\delta}$ has one non-zero eigenvalue of multiplicity one, which yields
$\dim\ker(\sfI-(\sfI-\sfN_{\eps,\dl})^{-1}\sfL_{\eps,\dl})\le1$. Using \eqref{eq-Lkernel} again, we conclude that the corresponding eigenfunction is (a multiple of) $\phi_0:=(\sfI-\sfN_{\eps,\dl})^{-1}(\sign(U_{\aa,\eps})|U_{\aa,\eps} |^{1/2} v_1) $. Furthermore, we have
\begin{align*}
(\sfI-\sfN_{\eps,\delta})^{-1} \sfL_{\eps,\delta}\phi_0 = \frac{1}{2\delta }
\Big( \int_{\dR^2}|U_{\aa,\eps}(x)|^{1/2} v_1 (x_2)
\big((\sfI\!-\!\sfN_{\eps,\delta})^{-1}  (\sign(U_{\aa,\eps})|U_{\aa,\eps}|^{1/2} v_1) \big) (x)\, \,\dd x\Big) \phi_0.
\end{align*}
The condition \eqref{eq-kerempty} is satisfied if the mentioned eigenvalue equals one, in other words, if equation~\eqref{eq-kerfun} has a solution.

The function $F$ can be viewed as
\begin{equation}\label{eq:Frepr}
	F(\eps,\dl) = \frac{1}{2\dl}
	\left(|U_{\aa,\eps}|^{1/2}v_1, (\sfI -\sfN_{\eps,\dl})^{-1}(
	\sign(U_{\aa,\eps})|U_{\aa,\eps}|^{1/2} v_1)\right)_{L^2(\dR^2)}.
\end{equation}
In view of Lemma~\ref{le-Nsmall} the operator-valued function $(\sfI-\sfN_{\eps,\dl})^{-1}$ is continuous with respect to $\eps$ and $\dl$ in the operator norm for all $\eps > 0$ sufficiently small. The functions $|U_{\aa,\eps}|^{1/2}v_1$ and $\sign(U_{\aa,\eps})|U_{\aa,\eps}|^{1/2}v_1$ are obviously continuous in the norm of $L^2(\dR^2)$ with respect to variation of the parameter $\eps$. Consequently, it follows from the representation~\eqref{eq:Frepr} that the function $F$ is continuous in $\eps$ and $\dl$ for $\dl >0$ and sufficiently small $\eps$.
\end{proof}
\section{Proof of Theorem~\ref{thm}}

Now we can analyze the spectral equation \eqref{eq-kerfun}. Using Lemma~\ref{le-Nsmall} we can expand
for all sufficiently small $\eps > 0$ the inverse into the Neumann series, $(\sfI-\sfN_{\eps,\delta})^{-1}=\sfI+\sfN_{\eps,\delta} +\sfN_{\eps,\delta}^2+\cdots$, which
allows us to write the function $F$ in the form of a series,
$$
F(\eps, \delta ) = \sum_{j=0}^\infty \frac{1}{2\delta }\Big( \int_{\dR^2}|U_{\aa,\eps}(x)|^{1/2} v_1 (x_2 )
\big((\sfN_{\eps,\delta})^j (\sign(U_{\aa,\eps})|U_{\aa,\eps} |^{1/2} v_1)\big) (x)\,\,\dd x\Big)\,.
$$
 For $j\geq 1$ we can estimate the integral in the bracket using Lemma~\ref{le-Nsmall} and the Cauchy-Schwarz inequality
\[
\begin{aligned}
&\left|\int_{\dR^2}|U_{\aa,\eps}(x)|^{1/2} v_1 (x_2)
\big((\sfN_{\eps,\dl})^j \sign(U_{
	\aa,\eps})|U_{\aa,\eps} |^{1/2} v_1 \big) (x )\,\,\dd x \right| \\
&\qquad
\leq  \|(\sfN_{\eps,\delta}  )^j\|\cdot \||U_{\aa,\eps}|^{1/2}v_1 \|^2_{L^2(\dR^2)}\\[.3em] \label{eq-estim4}
  & \qquad \leq C^j\|v_1\|_{L^\infty(\dR^2)}^2 \|U_{\aa,\eps} \|^{j+1}_{L^1(\dR^2)}.
\end{aligned}
\]
Let us introduce the function $G(\eps,\dl) := \dl-\dl F(\eps,\dl)$.
We get from Lemma~\ref{lem:BS} that $\mu_1-\dl^2$ is an eigenvalue of $\sfH_{\aa,\eps}$ if, and only if $G(\eps,\dl) = 0$.
Observe that $G$ is continuous for $\dl > 0$ and for sufficiently small $\eps > 0$.
The above estimates allow us to write
\begin{equation*}\label{eq-spect3}
	G(\eps, \dl) = \dl - \frac12\int_{\dR^2} U_{\aa,\eps}(x) v_1^2(x_2)\,\dd x + \cO_{\rm u}\big(\|U_{\aa,\eps}\|_{L^1(\dR^2)}^2\big),\qquad \eps\arr0,
\end{equation*}
where $\cO_{\rm u}\big(g(\eps)\big)$ stands for a function in $\eps$ and $\dl$ that can be bounded from above by $|g(\eps)|$ multiplied by a positive constant, which is independent of $\dl$.
Using that $v_1\in H^2(\dR)$ and that $H^2(\dR)$ is continuously embedded into $C^1(\dR)$
(see \eg~\cite[Cor. 9.13]{Br}) we get $v_1(d+\eps') = v_1(d) + \cO(\eps')$ as $\eps'\arr 0$. In this way, we arrive at the expansion
\begin{equation}\label{eq:G_expansion}
\begin{aligned}
		G(\eps, \dl)
		&=
		\dl - \frac{\aa v_1^2(d)}{2}
		\big(|\Omg_\eps\sm\Omg_0| - |\Omg_0\sm\Omg_\eps|\big) + \cO_{\rm u}\big(\eps^2\big)\\
		& = \dl - \frac{\aa\eps v_1^2(d)}{2}\int_{\dR} f(x_1)\,\dd x_1 + \cO_{\rm u}\big(\eps^2\big),\qquad\eps\arr0.
\end{aligned}
\end{equation}
This asymptotics is equivalent to the fact that there exists a constant $c' > 0$ independent of $\dl > 0$  and such that
\[
	\dl - \frac{\aa\eps v_1^2(d)}{2}\int_{\dR} f(x_1)\,\dd x_1 - c'\eps^2\le G(\eps,\dl) \le \dl - \frac{\aa\eps v_1^2(d)}{2}\int_{\dR} f(x_1)\,\dd x_1 + c'\eps^2.
\]
Assume that $\int_{\dR} f(x_1)\,\dd x_1 > 0$. Then for sufficiently small $\eps > 0$ we define
\[
\dl_\pm(\eps) = \frac{\aa\eps v_1^2(d)}{2}\int_\dR f(x_1)\,\dd x_1 \pm 2c'\eps^2 > 0
\]
and	 we have $G(\eps,\dl_+(\eps)) > 0$ and $G(\eps,\dl_-(\eps)) < 0$. Hence by continuity of $G$ with respect to $\dl$ we get that there exists $\dl(\eps) \in (\dl_-(\eps),\dl_+(\eps))$  such that $G(\eps,\dl(\eps)) = 0$ and it admits the asymptotic expansion
\begin{equation}\label{eq:asymp_dl_eps}
	\dl(\eps) = \frac{\aa\eps v_1^2(d)}{2}\int_\dR f(x_1)\,\dd x_1 + \cO(\eps^2),\qquad \eps\arr0.
\end{equation}
As a result we get from Proposition~\ref{prop:bs_bnd} and Lemma~\ref{lem:BS} that for all sufficiently small $\eps >0$ there is a unique simple eigenvalue $\lm_1^\aa(\eps) = \mu_1-\dl(\eps)^2$ of $\sfH_{\aa,\eps}$ below the threshold of the essential spectrum and this eigenvalue admits the expansion
\[
	\lm_1^\aa(\eps) = \mu_1 -
	\eps^2
	\left(\frac{\aa^2 v_1^4(d)}{4}\right)
	\left(\int_\dR f(x_1)\,\dd x_1\right)^2 + \cO(\eps^3),\qquad \eps \arr 0,
\]
Thus, the claim of (i) is proved.
 
In the case that $\int_\dR f(x_1)\,\dd x_1 < 0$ we immediately conclude from~\eqref{eq:G_expansion} that for any sufficiently small $\eps > 0$ there is no $\dl > 0$ such that $G(\eps,\dl) = 0$. By Lemma~\ref{lem:BS} the operator $\sfH_{\aa,\eps}$ has then no eigenvalues below $\mu_1$ for all sufficiently small $\eps > 0$ and hence the claim of (ii) is proved as well. 
\section{Proof of Theorem~\ref{thm:ef}}
\label{s:ef_proof}

Recall that by Theorem~\ref{thm}(i) under the assumption $\int_\dR f(x_1)\,\dd x_1 > 0$ the discrete spectrum of $\sfH_{\aa,\eps}$ consists of a unique simple eigenvalue $\lm_1^\aa(\eps) < \mu_1$ for all sufficiently small $\eps >0$. Let $\dl(\eps) > 0$ be such that $\lm_1^\aa(\eps) = \mu_1 - \dl(\eps)^2$ holds as in the proof of Theorem~\ref{thm}. For the sake of brevity we use the notation $V_{\aa,\eps} = \sign(U_{\aa,\eps})|U_{\aa,\eps}|^{1/2}$ and $\sfL_\eps := \sfL_{\eps,\dl(\eps)}, \sfN_{\eps} := \sfN_{\eps,\dl(\eps)}$, $\sfM_\eps := \sfM_{\eps,\dl(\eps)}$ where the operator-valued functions $\sfL_{\eps,\dl}$, $\sfN_{\eps,\dl}$ and $\sfM_{\eps,\dl}$ are defined as in the beginning of Subsection~\ref{ssec:ref_BS}. In the course of the proof $\eps > 0$ is assumed to be sufficiently small.

\smallskip

{\noindent{\it Step 1.}}
Let us pick a non-trivial real-valued function $\phi_\eps \in \ker(\sfI - V_{\aa,\eps}\wh{\sfR}_{\aa,0}(\dl(\eps))|V_{\aa,\eps}|)$, which exists by the Birman-Schwinger principle~\eqref{eq-BS}. According to~\cite[Lem. 1]{B95},
%
\begin{equation}\label{eq:efunction}
	f_\eps := \wh\sfR_{\aa,0}(\dl(\eps))\phi_\eps
\end{equation}
is an eigenfunction of $\sfH_{\aa,\eps}$ corresponding to the eigenvalue $\lm_1^\aa(\eps) = \mu_1 - \dl(\eps)^2$. Using Lemma~\ref{le-Nsmall} we get that $\|\sfN_\eps\| \arr 0$ as $\eps \arr 0$. Hence in view of the decomposition performed in Subsection~\ref{ssec:ref_BS} we equivalently have that
\[
	\phi_\eps\in \ker\big(\sfI - (\sfI-\sfN_\eps)^{-1}\sfL_\eps\big).
\]
Relying on the expansion of $(\sfI-\sfN_\eps)^{-1} = \sfI + (\sfI-\sfN_\eps)^{-1}\sfN_\eps$ we get that
\begin{equation}\label{eq:expansion_psi}
	\phi_\eps = \left(\sfI + \sfN_\eps(\sfI-\sfN_\eps)^{-1}\right)\sfL_\eps\phi_\eps,
\end{equation}
where one has
\[
\begin{aligned}
	&(\sfL_\eps\phi_\eps)(x) = \frac{C_{\phi_\eps}}{2\dl(\eps)} \omg_{\aa,\eps}(x),\\
	&\qquad\qquad\text{with}\qquad C_{\phi_\eps}:= \int_{\dR^2}|V_{\aa,\eps}(x)|v_1(x_2) \phi_\eps(x)\,\dd x\quad\text{and}\quad \omg_{\aa,\eps}(x) := V_{\aa,\eps}(x)v_1(x_2).
\end{aligned}
\]
Substituting~\eqref{eq:expansion_psi} into~\eqref{eq:efunction} we get
\begin{equation}
\begin{aligned}
	f_\eps & = \wh\sfR_{\aa,0}(\dl(\eps))\phi_\eps
	=
	\big[
	\wh\sfR^{I}_{\aa,0}(\dl(\eps))
	+
	\wh\sfR^{II}_{\aa,0}(\dl(\eps))
	\big]\phi_\eps\\
	&=
	\frac{C_{\phi_\eps}}{2\dl(\eps)}\Big\{
	\wh\sfR^{I}_{\aa,0}(\dl(\eps))\omg_{\aa,\eps} +
		\wh\sfR^{I}_{\aa,0}(\dl(\eps))	\sfN_\eps(\sfI-\sfN_\eps)^{-1}\omg_{\aa,\eps}
	 + \wh\sfR^{II}_{\aa,0}(\dl(\eps))
	(\sfI-\sfN_\eps)^{-1}\omg_{\aa,\eps}
	\Big\},
\end{aligned}	
\end{equation}
where the operator-valued functions $\wh{\sfR}^I_{\aa,0}$ and $\wh{\sfR}^{II}_{\aa,0}$ are defined as in \eqref{eq:whR}. We may drop the constant factor $\frac{C_{\phi_\eps}}{2\dl(\eps)}$ by changing the normalization and  consider the eigenfunction in the form
\begin{equation}\label{eq:f_expansion}
	\psi_\eps := \wh\sfR^{I}_{\aa,0}(\dl(\eps))\omg_{\aa,\eps} +
	\wh\sfR^{I}_{\aa,0}(\dl(\eps))	\sfN_\eps(\sfI-\sfN_\eps)^{-1}\omg_{\aa,\eps}
	+ \wh\sfR^{II}_{\aa,0}(\dl(\eps))
	(\sfI-\sfN_\eps)^{-1}\omg_{\aa,\eps}.
\end{equation}
The remaining analysis reduces to separate consideration of the three terms at the right-hand side of \eqref{eq:f_expansion} which we denote as $a_\eps,b_\eps,c_\eps\in L^2(\dR^2)$,
\begin{equation}\label{eq:abc}
a_{\eps} \!:=\!
\wh\sfR^{I}_{\aa,0}(\dl(\eps))\omg_{\aa,\eps},\quad b_\eps\! :=\!	\wh\sfR^{I}_{\aa,0}(\dl(\eps))	 \sfN_\eps(\sfI-\sfN_\eps)^{-1}\omg_{\aa,\eps},\quad
c_\eps\! :=\! \wh\sfR^{II}_{\aa,0}(\dl(\eps))(\sfI-\sfN_\eps)^{-1}\omg_{\aa,\eps}.
\end{equation}

\smallskip

\noindent{\emph{Step 2.}}
In this step we show an auxiliary asymptotic expansion, special cases of which will be used in the next step of the proof in the estimates for the quantities~\eqref{eq:abc}. Let $(g_\eps)_\eps$ be a family of arbitrary functions $g_\eps\in L^2_{\mathrm{loc}}(\dR^2)$.
Our aim is to show that the norm of $h_\eps := \wh\sfR_{\aa,0}^I(\dl(\eps))(V_{\aa,\eps} g_\eps)$ has the asymptotic expansion
\begin{equation}\label{eq:norm_expansion}
	\|h_\eps\|^2_{L^2(\dR^2) } = \frac{v_1^2(d)(1+\cO(\eps))}{4\dl(\eps)^3}\left|
	\int_{\dR^2} V_{\aa,\eps}(x) g_\eps(x)\,\dd x
	\right|^2,\qquad \eps\arr 0.
\end{equation}
We remark that although the family of functions $g_\eps$ is not assumed to be in $L^2(\dR^2)$ still the function $h_\eps$ is well defined if we interpret it as the operator $\wh\sfR^I_{\aa,0}(\dl(\eps))$ applied to the product $V_{\aa,\eps}g_\eps$, which clearly belongs to the Hilbert space $L^2(\dR^2)$ and also to the Banach space $L^1(\dR^2)$. Using the definition of $\wh\sfR^I_{\aa,0}$ and the formula~\eqref{eq:RI} we find that
\[
	h_\eps(x) = \frac{v_1(x_2)}{2\dl(\eps)}
	\int_{\dR^2} e^{-\dl(\eps)|x_1-x_1'|} v_1(x_2')V_{\aa,\eps}(x') g_\eps(x')\,\dd x'.
\]
Using the fact that $v_1$ is normalized in $L^2(\dR)$ and replacing $v_1(x)$ in the neighbourhood of the point $x = d$ by the expansion $v_1(x) = v_1(d) + \cO(|x-d|)$ as $x\arr d$, we get
\[
	\|h_\eps\|^2 _{L^2(\dR^2) }= \frac{v_1^2(d)(1+\cO(\eps))}{4\dl(\eps)^2}\int_\dR\left|\int_{\dR^2} e^{-\dl(\eps)|x_1-x_1'|} V_{\aa,\eps}(x')g_\eps(x')\,\dd x'\right|^2\,\dd x_1.
\]
Performing the substitution $t = \dl(\eps) x_1$ in the outer integral we can rewrite the above formula as
\[
\|h_\eps\|^2 _{L^2(\dR^2) }= \frac{v_1^2(d)(1+\cO(\eps))}{4\dl(\eps)^3}\int_\dR\left|\int_{\dR^2} e^{-|t- \dl(\eps)x_1'|} V_{\aa,\eps}(x')g_\eps(x')\,\dd x'\right|^2\,\dd t.
\]
Since the support of $V_{\aa,\eps}$ is compact and one has $\dl(\eps) = \cO(\eps)$ as $\eps\arr 0$ by Theorem~\ref{thm}(i), it is not hard to see that there is a constant $C> 0$ such that the inequality
\[
	\big|e^{-|t-\dl(\eps)x_1'|} -e^{-|t|}\big| \le C\eps
\]
holds for all $x' = (x_1',x_2')\in \supp V_{\aa,\eps}$ and all $t\in\dR$. Hence we get
\[
\begin{aligned}
\|h_\eps\|^2 _{L^2(\dR^2) }&= \frac{v_1^2(d)(1+\cO(\eps))}{4\dl(\eps)^3}\int_\dR e^{-2|t|}\left|\int_{\dR^2}  V_{\aa,\eps}(x')g_\eps(x')\,\dd x'\right|^2\,\dd t\\
&=
\frac{v_1^2(d)(1+\cO(\eps))}{4\dl(\eps)^3}\left|\int_{\dR^2}  V_{\aa,\eps}(x')g_\eps(x')\,\dd x'\right|^2.
\end{aligned}
\]

\smallskip

\noindent{\it Step 3.}
In this step we analyze the terms $a_\eps, b_\eps$, and $c_\eps$ in~\eqref{eq:abc}. First we consider $a_\eps$; using the definitions of $\wh\sfR^I_{\aa,0}$ and of $\omg_{\aa,\eps}$ we obtain
\[
	a_\eps(x) = \frac{v_1(x_2)}{2\dl(\eps)}
	\int_{\dR^2} e^{-\dl(\eps)|x_1-x_1'|} v_1^2(x_2')V_{\aa,\eps}(x')\,\dd x'.
\]
Applying~\eqref{eq:norm_expansion} with $g_\eps(x) := v_1(x_2)\in L^2 _{\mathrm{loc}}(\dR^2)$ we get
\begin{equation}\label{eq:term_a}
\begin{aligned}
	\|a_\eps\|^2 _{L^2(\dR^2) } & = \frac{v_1^2(d)(1+\cO(\eps))}{4\dl(\eps)^3}\left[\int_{\dR^2} v_1(x_2)V_{\aa,\eps}(x)\,\dd x\right]^2\\
	&=\frac{\aa\eps^2v_1^4(d)(1+\cO(\eps))}{4\dl(\eps)^3}\left[\int_\dR f(x_1)\,\dd x_1\right]^2\\
	&=
	\frac{2(1+\cO(\eps))}{\eps\aa^2 v_1^2(d) }\left[\int_\dR f(x_1)\,\dd x_1\right]^{-1},
\end{aligned}
\end{equation}
where in the last step we used the expansion of $\dl(\eps)$ implicitly given in Theorem~\ref{thm}\,(i).

Next we consider the term $b_\eps$. In view of the decomposition $\sfN_\eps = \sfM_\eps + V_{\aa,\eps}\wh\sfR^{II}_{\aa,0}(\dl(\eps))|V_{\aa,\eps}|$ the subsequent analysis boils down to separate consideration of the terms
\[
b_\eps' := \wh\sfR^I_{\aa,0}(\dl(\eps))\sfM_\eps(\sfI-\sfN_\eps)^{-1}\omg_{\aa,\eps},\qquad b_\eps'':=\wh\sfR^I_{\aa,0}(\dl(\eps))V_{\aa,\eps}\wh\sfR^{II}_{\aa,0}(\dl(\eps))|V_{\aa,\eps}|(\sfI-\sfN_\eps)^{-1}\omg_{\aa,\eps}
\]
to which the quantity of interest splits, $b_\eps = b_\eps'+b_\eps''$. Applying~\eqref{eq:norm_expansion} to the first of these two terms, setting there $g_\eps = \aa^{-1} V_{\aa,\eps}\sfM_\eps (\sfI-\sfN_\eps)^{-1}\omg_{\aa,\eps} \in L^2(\dR^2) \subset L^2_{\mathrm{loc}} (\dR^2)$, we get
\[
\begin{aligned}
	&\|b_\eps'\|^2 _{L^2(\dR^2) }\! =\!
	\frac{v_1^2(d)(1+\cO(\eps))}{4\dl(\eps)^3}\times\\
	&\quad\times \left[
	\int_{\dR^2} V_{\aa,\eps}(x) v_1(x_2)\!
	\int_{\dR^2}\!m_{\dl(\eps)}(x_1,x_1') |V_{\aa,\eps}(x')|
	v_1(x_2')\big((\sfI\!-\!\sfN_\eps)^{-1}
	\omg_{\aa,\eps}\big)(x')\,\dd x'\,\dd x
	\right] ^2\!\!,
\end{aligned}
\]
where the function $m_{\dl(\eps)}$ is defined by~\eqref{eq:def_kernelM} with the bound determined by~\eqref{eq:def_Mprime}. Hence we get
\begin{equation}\label{eq:term2_bnd1}
	\|b_\eps'\|^2 _{L^2(\dR^2) } \le
	\frac{\aa v_1^8(d)M^2(1+\cO(\eps))}{4\dl(\eps)^3}\|(\sfI-\sfN_\eps)^{-1}\|^2
	\left(\int_{\dR^2} |V_{\aa,\eps}(x)|\,\dd x\right)^4
	\le C_1\frac{\eps^4}{\dl(\eps)^3} \le C_2\eps,
\end{equation}
with some constants $C_1,C_2 > 0$ independent of $\eps$; we used Theorem~\ref{thm}\,(i) in the last step.

Applying now~\eqref{eq:norm_expansion} to $b_\eps''$ with $g_\eps = \wh\sfR^{II}_{\aa,0}(\dl(\eps)) |V_{\aa,\eps}|(\sfI-\sfN_\eps)^{-1}\omg_{\aa,\eps}$ which belongs to $L^2 (\dR^2) \subset L^2_{\mathrm{loc} } (\dR) $ we get
\begin{equation}\label{eq:term3}
\|b_\eps''\|^2_{L^2(\dR^2) }
= \frac{v_1^2(d)(1+\cO(\eps))}{4\dl(\eps)^3}
\left[\int_{\dR^2} V_{\aa,\eps}(x)\big(\wh\sfR^{II}_{\aa,0}(\dl(\eps))|V_{\aa,\eps}|(\sfI-\sfN_\eps)^{-1}\omg_{\aa,\eps}\big)(x)\,\dd x  \right]^2\,.
\end{equation}
Introducing next the notation
\[
 \cA_\eps := \int_{\dR^2} V_{\aa,\eps}(x)\big(\wh\sfR^{II}_{\aa,0}(\dl(\eps))|V_{\aa,\eps}|(\sfI-\sfN_\eps)^{-1}\omg_{\aa,\eps}\big)(x)\,\dd x
\]
we infer with the help of the bound in Lemma~\ref{lem:R_norm_est}\,(i) that
\begin{equation}\label{eq:Aeps}
\begin{aligned}
	|\cA_\eps| &\le \big\|\wh\sfR^{II}_{\aa,0}(\dl(\eps))|V_{\aa,\eps}|(\sfI-\sfN_\eps)^{-1}\omg_{\aa,\eps} \big\|_{L^\infty (\dR^2)}\int_{\dR^2} |V_{\aa,\eps}(x)|\,\dd x\\
	&\le
C_3\big\||V_{\aa,\eps}|(\sfI-\sfN_\eps)^{-1}\omg_{\aa,\eps}\big\|_{L^2 (\dR^2)}
	  \int_{\dR^2} |V_{\aa,\eps}(x)|\,\dd x\\
	  &\le
	  C_3\sqrt{\aa}\|(\sfI-\sfN_\eps)^{-1}\|\cdot
	\| \omg_{\aa,\eps}\|_{L^2(\dR^2)}
	 \int_{\dR^2} |V_{\aa,\eps}(x)|\,\dd x
	   \le C_4 \eps^{3/2}
\end{aligned}
\end{equation}
holds with some constants $C_3,C_4 > 0$ independent of $\eps$. Substituting the estimate~\eqref{eq:Aeps} into~\eqref{eq:term3} we arrive at the bound
\begin{equation}\label{eq:term2_bnd2}
	\|b_\eps''\|^2 _{L^2(\dR^2)} \le C_5\frac{\eps^3}{\dl(\eps)^3} \le C_6
\end{equation}
with constants $C_5, C_6 > 0$ independent of $\eps > 0$; here again we used in the last step the asymptotics of $\dl(\eps)$ given in Theorem~\ref{thm}\,(i). Combining the bounds~\eqref{eq:term2_bnd1} and~\eqref{eq:term2_bnd2} we conclude that there exists a constant $C_7 > 0$ independent of $\eps$ such that
\begin{equation}\label{eq:term2_bnd}
	\|b_\eps\| _{L^2(\dR^2)}\le C_7,
\end{equation}
for all sufficiently small $\eps > 0$.

Finally, we consider the term $c_\eps$ the analysis of which is rather straightforward. As a consequence of the bound in Lemma~\ref{lem:R_norm_est}\,(i) we infer that there exist constants $C_7,C_8 > 0$ independent of $\eps$ such that
\begin{equation}\label{eq:term3_bnd}
	\|c_\eps\| _{L^2(\dR^2)} \le C_7\|(\sfI-\sfN_\eps)^{-1}\|\cdot\|\omg_{\aa,\eps}\|_{L^2(\dR^2)} \le C_8\sqrt{\eps}.
\end{equation}

\smallskip

\noindent{\it Step 4.}
In the last step we combine the expansion of $\psi_\eps$ in~\eqref{eq:f_expansion} obtained in Step 1 with the estimates of $a_\eps, b_\eps$ and $c_\eps$ obtained in Step 3 in order to get an asymptotic expansion of the eigenfunction $\sfH_{\aa,\eps}$ corresponding to its  unique simple eigenvalue in the limit $\eps \arr 0$. It follows from the expansion in~\eqref{eq:f_expansion} that an eigenfunction of $\sfH_{\aa,\eps}$ corresponding to this eigenvalue has the expansion
\begin{equation}\label{eq:psi_ef}
	\psi_\eps = a_\eps + b_\eps + c_\eps.
\end{equation}
Recall that the linear function $\wh\dl(\eps)$ is defined in the formulation of the theorem as
\[
	\wh\dl(\eps) =
	\eps\left(\frac{\aa v_1^2(d)}{2}\right)
	\int_{\dR} f(x_1)\,\dd x_1.
\]
We note that the asymptotics \eqref{eq:asymp_dl_eps} implies that $\dl(\eps) - \wh\dl(\eps) = \cO(\eps^2)$ holds as $\eps\arr0$. Let us introduce an auxiliary function,
\[
	\wh a_\eps := \wh\sfR^I_{\aa,0}(\wh\dl(\eps))\omg_{\aa,\eps} = \frac{v_1(x_2)}{2\wh\dl(\eps)}\int_{\dR^2} e^{-\wh\dl(\eps)|x_1-x_1'|} V_{\aa,\eps}(x')v_1^2(x_2')\,\dd x'
	\in L^2(\dR^2).
\]
Recall also that the function $a_\eps$ was defined by $a_\eps = \wh\sfR^I_{\aa,0}(\dl(\eps))\omg_{\aa,\eps}$. Next we show that $\wh a_\eps$ is close to $a_\eps$ in the needed sense. Using the resolvent identity we get
\begin{equation}\label{eq:wha_minus_a}
\begin{aligned}
	\wh a_\eps - a_\eps & = \big[
	\wh\sfR^I_{\aa,0}(\wh\dl(\eps))
	-\wh\sfR^I_{\aa,0}(\dl(\eps))
	\big]\omg_{\aa,\eps}\\
	& = \left[
	\sfR_{\aa,0}
	\left(\sqrt{-\mu_1+\wh\dl(\eps)^2}\right)
	-\sfR_{\aa,0}\left(
	\sqrt{-\mu_1+\dl(\eps)^2}\right)
	\right]\sfP_0\omg_{\aa,\eps}
	\\
	&=
	\left[\dl(\eps)^2-\wh\dl(\eps)^2\right]
	\sfR_{\aa,0}
	\left(\sqrt{-\mu_1+\wh\dl(\eps)^2}\right)
	\sfR_{\aa,0}\left(
	\sqrt{-\mu_1+\dl(\eps)^2}\right)
	\sfP_0\omg_{\aa,\eps} \\
	& =
	\left[\dl(\eps)^2-\wh\dl(\eps)^2\right]
	\sfR_{\aa,0}
	\left(\sqrt{-\mu_1+\wh\dl(\eps)^2}\right)
	a_\eps.
\end{aligned}
\end{equation}
Clearly, we have
\begin{equation}\label{eq:whaa_est1}
\dl(\eps)^2-\wh\dl(\eps)^2 = \cO(\eps^3),\qquad \eps \arr 0.
\end{equation}
Applying the spectral theorem and using the fact that $\mu_1$ is the lowest spectral point of $\sfH_{\aa,0}$ we obtain
\begin{equation}\label{eq:whaa_est2}
	\left\|
	\sfR_{\aa,0}\left(\sqrt{-\mu_1 +\wh\dl(\eps)^2}\right)
	\right\| = \frac{1}{\wh\dl(\eps)^2} = \cO(\eps^{-2}),\qquad \eps\arr 0.
\end{equation}
Combining~\eqref{eq:wha_minus_a} with~\eqref{eq:whaa_est1},~\eqref{eq:whaa_est2} and with~\eqref{eq:term_a} we infer that there exists a constant $C_9 > 0$ independent of $\eps > 0$ such that
\begin{equation}\label{eq:whaa_norm}
	\| \wh a_\eps - a_\eps \| \le C_9
\end{equation}
for all sufficiently small $\eps > 0$. As a consequence of~\eqref{eq:term_a} and~\eqref{eq:whaa_norm}, we conclude that
\begin{equation}\label{eq:wha_norm}
	\|\wh a_\eps\| _{L^2(\dR^2)} = \frac{\sqrt{2}}{\sqrt{\eps}\aa v_1(d)}\left[\int_\dR f(x_1)\,\dd x_1\right]^{-1/2} + \cO(1),\qquad\eps\arr 0.
\end{equation}
The functions $u_\eps$ and $v_\eps$, the leading term and the remainder, in the formulation of the theorem can be now represented as
\begin{equation} \label{uv_def}
	u_\eps = \frac{2\wh\dl(\eps)}{\sqrt{\eps}}\wh a_\eps,\qquad v_\eps =
	\frac{2\wh\dl(\eps)}{\sqrt{\eps}}
	\left(a_\eps-\wh a_\eps + b_\eps + c_\eps\right).
\end{equation}
In particular, we infer from~\eqref{eq:psi_ef} that $u_\eps + v_\eps$ is an eigenfunction of $\sfH_{\aa,\eps}$ for all sufficiently small $\eps > 0$ corresponding to the eigenvalue $\lm_1^\aa(\eps)$. It follows from~\eqref{eq:wha_norm} that
\[
	\|u_\eps\|_{L^2(\dR^2)} = \sqrt{2}v_1(d)\left[
	\int_{\dR} f(x_1)\,\dd x_1
	\right]^{1/2} + \cO(\sqrt{\eps}),\qquad \eps \arr 0.
\]
As a consequence of~\eqref{eq:term2_bnd},~\eqref{eq:term3_bnd} and~\eqref{eq:whaa_norm} we get that
\[
	\|v_\eps\|_{L^2(\dR^2)} = \cO(\sqrt{\eps}),\qquad \eps\arr 0;
\]
by that, the proof of the theorem is concluded.
\section{Proof of Theorem~\ref{thm:existence}}
\label{s:crit}

Let us pick a non-negative real-valued function $\chi\in C^\infty_0(\dR)$ such that $\chi(x) = 1$ for $x\in [-1,1]$ and $\supp\chi = [-2,2]$. Consider the following trial function	
\[
\psi_{\lm,\eps}(x_1,x_2) := \chi(\eps^3x_1)\big[1+\lm\eps f(x_1)\big]v_1(x_2),
\]
where the parameter $\lm > 0$ will be determined at a later stage. Under the regularity assumption $f\in W^{1,\infty}(\dR)$ it is easy to verify that $\psi_{\lm,\eps}\in H^1(\dR^2)$.
If for some $\lm > 0$ and all sufficiently small $\eps > 0$ the following inequality holds
\[
\cI_{\lm,\eps} :=
\int_{\dR^2}|\nb \psi_{\lm,\eps}|^2\,\dd x-\
\aa\int_{\Omg_\eps}|\psi_{\lm,\eps}|^2\,\dd x-\mu_1\int_{\dR^2}|\psi_{\lm,\eps}|^2\,\dd x < 0,
\]
then the operator $\sfH_{\aa,\eps}$ has by the min-max principle a discrete eigenvalue below $\mu_1$ for all sufficiently small $\eps > 0$. Moreover, in view of Proposition~\ref{prop:bs_bnd} the operator $\sfH_{\aa,\eps}$ has in this case a unique simple eigenvalue for all sufficiently small $\eps > 0$. Let $\eps > 0$ be so small that $\supp f\subset[-1/\eps^3,1/\eps^3]$. Recall that $v_1\in H^2(\dR)$ is the normalized ground-state eigenfunction of the one-dimensional Schr\"odinger operator $\sfh_\aa$. It follows from the embedding of $H^2(\dR)$ into $C^1(\dR)$ that $v_1\in C^1(\dR)$. Moreover, the eigenvalue equation $-v_1'' -\aa\chi_{[0,d]}v_1 = \mu_1 v_1$ implies that the second derivative of $v_1$ is continuous on the intervals $(-\infty,0]$, $[0,d]$ and $[d,\infty)$.

We substitute the expression for $\psi_{\lm,\eps}$ into the formula for $\cI_{\lm,\eps}$,
\[
\begin{aligned}
\cI_{\lm,\eps} & =
\eps^6\int_\dR|\chi'(\eps^3 x_1)|^2\,\dd x_1 + \int_{\dR}\lm^2\eps^2|f'(x_1)|^2\,\dd x_1\\
&\qquad + \int_{\dR}\int_{\dR}
|\chi(\eps^3x_1)|^2|1+\lm\eps f(x_1)|^2|v_1'(x_2)|^2\,\dd x_1\,\dd x_2\\
&\qquad\quad-\aa\int_{\dR}
\int_0^d|\chi(\eps^3 x_1)|^2|1+\lm\eps f(x_1)|^2|v_1(x_2)|^2\,\dd x_2\,\dd x_1 \\
&\qquad\qquad-\aa\int_{\dR} \int_{d}^{d+\eps f(x_1)}
|1+\lm\eps f(x_1)|^2|v_1(x_2)|^2\,\dd x_2\,\dd x_1\\
&\qquad\qquad\quad-\mu_1\int_{\dR}\int_{\dR}|\chi(\eps^3x_1)|^2|1+\lm\eps f(x_1)|^2|v_1(x_2)|^2\,\dd x_1\,\dd x_2\\
&=
\eps^3\int_\dR|\chi'(x_1)|^2\,\dd x_1 + \int_{\dR}\lm^2\eps^2|f'(x_1)|^2\,\dd x_1
 -\aa\int_{\dR} \int_{d}^{d+\eps f(x_1)}
|1+\lm\eps f(x_1)|^2|v_1(x_2)|^2\,\dd x_2\,\dd x_1\\
&
=\eps^3\int_\dR|\chi'(x_1)|^2\,\dd x_1 + \int_{\dR}\lm^2\eps^2|f'(x_1)|^2\,\dd x_1\\
&\qquad -\aa\int_{\dR}
|1+\lm\eps f(x_1)|^2
\big[\eps  f(x_1)|v_1(d)|^2 + v_1(d)v_1'(d)\eps^2 f^2(x_1)+\cO_{\rm u}(\eps^3) \big]\,\dd x_1\,,
\end{aligned}
\]
where we used in the last step the Taylor expansion of $v_1$ up to the second term with a remainder in the neighbourhood of the point $x_2=d$; here $\cO_{\rm u}(\eps^3)$ means a compactly supported function of $x_1$ which can uniformly bounded by a multiple of $\eps^3$.

In this way, we derive the expansion
\[
\cI_{\lm,\eps} = \eps^2\left[
\lm^2\int_{\dR}|f'(x_1)|^2\,\dd x_1-\aa\big(
2\lm|v_1(d)|^2+ v_1(d)v_1'(d)\big)\int_{\dR}|f(x_1)|^2\,\dd x_1\right] +\cO(\eps^3).
\]
It is clear from the eigenvalue equation that $v_1(x) = Ce^{-\sqrt{-\mu_1}x}$ for all $x > d$ and some constant $C > 0$. Hence we get that
\[
v_1'(d) = -\sqrt{-\mu_1}v_1(d),
\]
and we finally end up with the expansion
\[
\cI_{\lm,\eps} = \eps^2\left[
\lm^2\int_{\dR}|f'(x_1)|^2\,\dd x_1-\aa|v_1(d)|^2
\big(
2\lm-\sqrt{-\mu_1}\big)\int_{\dR}|f(x_1)|^2\,\dd x_1\right] +\cO(\eps^3).
\]
The quantity $\cI_{\lm,\eps}$ is negative for all sufficiently small $\eps > 0$ provided that
\[	
\frac{\int_{\dR}|f'(x_1)|^2\,\dd x_1}{\int_{\dR}|f(x_1)|^2\,\dd x_1} <
\aa|v_1(d)|^2\left(\frac{2}{\lm}-\frac{\sqrt{-\mu_1}}{\lm^2}\right).
\]
Maximizing the right hand side with respect to $\lm$ we find that the maximum is positive and is achieved for $\lm = \sqrt{-\mu_1}$. The final sufficient condition that we get is
\[	
\frac{\int_{\dR}|f'(x_1)|^2\,\dd x_1}{\int_{\dR}|f(x_1)|^2\,\dd x_1} <
	\frac{\aa|v_1(d)|^2}{\sqrt{-\mu_1}},
\]
by which the theorem is proved.

\subsection*{Acknowledgement}
The research of P.\,E. and V.\,L. was supported by the Czech Science Foundation (GA\v{C}R) within the project 21-07129S; the former is also obliged to the EU project  CZ.02.1.01/0.0/0.0/16\textunderscore 019/0000778.
S.\,K. acknowledges the financial support from the program of the Polish Ministry
of Science and Higher Education under the name Regional Initiative of Excellence   in 2019-2022, Project No. 03/RID/2018/19.


\newcommand{\etalchar}[1]{$^{#1}$}


\begin{thebibliography}{\textsc{BCD{\etalchar{+}}72}}

	
\bibitem[BS87]{BS87}
M.~S. Birman and M.~Z. Solomjak,
{\em Spectral theory of self-adjoint operators  in  Hilbert
	space}, D. Reidel Publishing Company, Dordrecht, 1987.
	
\bibitem[B95]{B95}
J.~Brasche,
On the spectral properties of singular perturbed operators,
in: Ma, Z. M. (ed.) et al., \emph{Dirichlet forms and stochastic processes}, de Gruyter, Berlin,  (1995), 65--72.

\bibitem[Br]{Br}
H.~Brezis, \emph{Functional analysis,
	Sobolev spaces and partial differential equations},
 Springer, New York, 2011.

\bibitem[BGRS97]{BGRS97}	
W.~Bulla, F.~Gesztesy, W.~Renger,
and B.~Simon,
Weakly coupled bound states in quantum waveguides,
{\it Proc. Amer. Math. Soc.} {\bf 125}  (1997), 1487--1495.



\bibitem[EKP20]{EKP20}
S.~Egger, J.~Kerner, and K.~Pankrashkin, Discrete spectrum of Schr\"odinger operators with potentials concentrated near conical surfaces, \emph{Lett. Math. Phys.} \textbf{110} (2020), 945--968.

\bibitem[Ex20]{Ex20}
P.~Exner: Spectral properties of soft quantum waveguides, \emph{J. Phys. A: Math. Theor.} \textbf{53} (2020), 355302 (15pp); corrigendum  \textbf{54} (2021), 099501.


\bibitem[EK15]{EK15}
P.~Exner and S.~Kondej,
Gap asymptotics in a weakly bent leaky quantum wire,
{\it J. Phys. A, Math. Theor.} {\bf 48} (2015),  495301.

\bibitem[EKL18]{EKL18}
P.~Exner, S.~Kondej, and V.~Lotoreichik, Asymptotics of the bound state induced by $\dl$-interaction supported on a weakly deformed plane,
{\it J. Math. Phys.} {\bf 59} (2018),   013501.

\bibitem[EK15]{EK15}
P.~Exner and H.~Kova\v{r}\'{\i}k, \emph{Quantum Waveguides}, Springer, Cham 2015.

\bibitem[EL21]{EL20}
P.~Exner and V.~Lotoreichik, Optimization of the lowest eigenvalue of a soft quantum ring, {\it Lett. Math. Phys.} {\bf 111} (2021), 28.

\bibitem[EV97]{EV97}
P.~Exner and S.~Vugalter,
Bound states in a locally deformed waveguide: the critical case,
{\it Lett. Math. Phys.} {\bf 39} (1997),
59--68.

\bibitem[FS11]{FS11}
R.\,L.~Frank and B.~Simon,
Critical Lieb-Thirring bounds in gaps and the generalized Nevai conjecture for finite gap Jacobi matrices,
{\it Duke Math. J.} {\bf 157} (2011),  461--493.


\bibitem[K95]{Kato}
T.~Kato,
\emph{Perturbation theory for linear operators},
Springer-Verlag, Berlin, 1995.


\bibitem[KKK21]{KKK21}
S.~Kondej, D.~Krej\v{c}i\v{r}\'{\i}k, and J.~K\v{r}\'{\i}\v{z}, Soft quantum waveguides with a explicit cut locus, \emph{J. Phys. A: Math. Theor.} \textbf{54} (2021), 30LT01.

\bibitem[McL]{McL}
W.~McLean, 
\emph{Strongly elliptic systems and boundary integral equations},
Cambridge University Press,  Cambridge, 2000.	


\bibitem[RSI]{RSI}
M.~Reed and B.~Simon,
\emph{Methods of modern mathematical physics. I: Functional analysis,} Academic Press, New York, 1980.

\bibitem[S12]{S12}
K.~{Schm\"udgen},
{\it Unbounded self-adjoint operators on Hilbert space,}
 Springer, Dordrecht, 2012.
\bibitem[Si76]{Si76}
B.~Simon, The bound state of weakly coupled Schr\"odinger ope\-rators in one and two dimensions, \emph{Ann. Phys.} \textbf{97} (1976), 279--288.


\bibitem[Si15]{S4}
B.~Simon, \emph{Operator theory. A comprehensive course in analysis, part 4.}
American Mathematical Society, Providence,  2015.

\bibitem[Te]{Te}
G. Teschl,
	\emph{Mathematical methods in quantum mechanics. With applications to Schr\"{o}dinger operators}, American Mathematical Society, Providence, 2009.
\bibitem[WT14]{WT14}
J.~Wachsmuth and S.~Teufel, Effective Hamiltonians for constrained quantum systems, {\it Mem. Amer. Math. Soc.},  {\bf 230}, Providence, R.I. 2014

\bibitem[W00]{W00}
J.~Weidmann,
\emph{Lineare Operatoren in Hilbertr\"{a}umen. Teil I: Grundlagen}, Teubner, Wiesbaden, 2000.
\bibitem[Z05]{Z05}
A.~Zettl, \emph{Sturm-Liouville theory}, American Mathematical Society, Providence, 2005.

\end{thebibliography}
 \end{document}